\documentclass[final, 12pt]{article}

\usepackage{amsmath}
\usepackage{amsfonts}
\usepackage{makeidx}
\usepackage{amsthm}
\usepackage{mathtools}
\usepackage{amssymb}
\usepackage{bm}
\usepackage{cite}
\usepackage{showkeys}
\usepackage[english]{babel}
\usepackage{dblaccnt}
\usepackage{accents}
\usepackage{graphicx}
\usepackage{subfig}
\usepackage{psfrag}
\usepackage{color}
\usepackage{float}
\usepackage{amscd}
\usepackage[mathscr]{euscript}
\usepackage{lipsum}
\usepackage{epstopdf}
\usepackage{algorithm}
\usepackage{algpseudocode}

\graphicspath{{images/}}

\textwidth = 16.6cm
 \textheight = 22.15cm
 \topmargin = -1.0cm
 \headsep =20pt
 \oddsidemargin = -.04cm
 \evensidemargin = -.04cm

\newcommand{\Z}{\ensuremath{\mathbb{Z}}}
\newcommand{\R}{\ensuremath{\mathbb{R}}}
\newcommand{\C}{\ensuremath{\mathbb{C}}}

\renewcommand{\d}{\mathrm{d}}

\renewcommand{\Re}{\mathrm{Re}\,}

\newcommand{\E}{{\mathbf{E}}}

\renewcommand{\div}{\mathrm{div}}
\newcommand{\curl}{\mathrm{curl}\,}

\newcommand{\x}{\mathbf{x}}
\newcommand{\y}{\mathbf{y}}
\newcommand{\z}{\mathbf{z}}
\newcommand{\g}{\mathbf{g}}
\newcommand{\f}{\mathbf{f}}
\renewcommand{\u}{\mathbf{u}}
\renewcommand{\v}{\mathbf{v}}

\newtheorem{defi}{Definition}
\newtheorem{lemma}[defi]{Lemma}
\newtheorem{theorem}[defi]{Theorem}

\newtheorem{remark}[defi]{Remark}

\newcommand{\G}{\mathbb{G}}
\newcommand{\A}{\mathbf{A}}
\newcommand{\F}{\mathbb{F}}
\newcommand{\h}{\mathbf{h}}
\newcommand{\e}{\mathbf{e}}
\newcommand{\I}{\mathcal{I}}

\title{A stable imaging functional for anisotropic periodic media in electromagnetic inverse scattering}

\author{Dinh-Liem Nguyen \thanks{Department of Mathematics, Kansas State University, Manhattan, KS 66506, USA; (\texttt{dlnguyen@ksu.edu}, \texttt{trungt@ksu.edu})} \and Trung Truong \footnotemark[1] 
}

\begin{document}

\date{}
\maketitle

\begin{abstract}
 The paper is concerned with  the  inverse scattering problem for  Maxwell's equations 
 in three dimensional  anisotropic periodic media.  We study  a new imaging functional for fast and stable 
 reconstruction of the shape of  anisotropic periodic scatterers 
from boundary measurements of the scattered field for a number of  incident fields. 
This imaging functional is   simple to implement and very robust against noise in the data. Its implementation is non-iterative, computationally cheap, and
 does not involve solving any ill-posed problems. 
The resolution  and stability  analysis of the imaging functional is investigated. Our numerical study shows that this imaging functional is more stable than 
 that of the factorization method and more efficient 
 than  that of the orthogonality sampling method in reconstructing periodic scatterers.  
\end{abstract}

\sloppy

{\bf Keywords.} electromagnetic  inverse  scattering, periodic media, sampling method, shape reconstruction, photonic crystals

\bigskip

{\bf AMS subject classification. } 35R30, 78A46, 65C20

\section{Introduction}

We consider three dimensional periodic media that are unboundedly periodic  in 
say $x_1$- and $x_2$-directions and bounded in $x_3$-direction. These periodic media 
can model two-dimensional
photonic crystals that are popular in optics~\cite{Dorfl2012}. 
We are interested in the inverse problem of determining the shape of these  periodic media 
from boundary measurements of the scattered electromagnetic field generated by a number of incident fields.  
The motivation of this  inverse problem comes from
 applications of nondestructive evaluations for photonic crystals using electromagnetic waves. In the past two decades, there have
been a relatively large amount  of  results on 
numerical methods for shape reconstruction of periodic  media in inverse scattering. 
Results for the case  of Helmholtz type equations can be found in~\cite{Arens2003a, Elsch2003,  Arens2005, Lechl2010, Elsch2012,Yang2012, Zhang2014, Hadda2017, Zheng2017, Lechl2018, Cakon2019, Haddar2020, Nguye2020, Boukari2023, Nguye2023} and references therein. However, there have been only a limited number of results on numerical reconstructions for the inverse problem for full  Maxwell's equations in three dimensions, see~\cite{Sandf2010, Lechl2013b, Bao2014, Nguye2016, Jiang2017}. This is obviously due to the technical complication as well as the high computational complexity of Maxwell's equations in three-dimensional periodic media. The  methods that were mainly studied for the case of Maxwell's equations  are the factorization method~\cite{Sandf2010, Lechl2013b, Nguye2016} and the near field imaging method  that relies  on a transformed field expansion~\cite{Bao2014, Jiang2017, Bao2022}. The latter method can provide  subwavelength resolution but it requires  the periodic scattering  layer to be   a smooth periodic function multiplied by a small surface deformation parameter.   While the factorization method  is more flexible in terms of shape and properties (e.g. anisotropic or chiral) of periodic media, it is unfortunately not very robust against noise in the data.

In this paper we investigate a new imaging functional for the reconstruction of anisotropic periodic scatterers for Maxwell's equations in three dimensions. 
This new imaging functional is not only very robust against noise in the data but also quite flexible to different kinds of shape of periodic scattering media. 
The  implementation of the  imaging functional   is simple, computationally cheap, and fast as one only needs to evaluate a  double sum that essentially  involves a finite number of the  propagating modes of the scattered field data. The implementation also does not involve solving any ill-posed problems. 
We prove that the imaging functional is associated with a volume integral over the periodic scatterer in one unit cell and this volume integral has a kernel that strongly peaks as
the sampling point is inside the periodic scatterer. The stability of the imaging  functional is also established.   The numerical study also shows that the proposed sampling method is more stable than the factorization method and is more efficient than the  orthogonality sampling method in reconstructing periodic scattering media. This can be considered as 
an extended study  of  the result in~\cite{Nguye2023}, where the Helmholtz equation case was investigated. Due to the  technical complication of the Maxwell's
equations in periodic media  this extension is nontrivial and requires some innovations. The imaging functional for the Maxwell case  is an infinite series instead of a finite sum as in the scalar case. The extension requires  a  careful and detailed analysis for a modal version of 
Green   formulas for the quasiperiodic Green's tensor of the direct problem and   the quasiperiodic scattered electric field via its volume integro-differential formulation.
It is also worth noting  that although the orthogonality  sampling method has been studied for inverse scattering from bounded objects~\cite{Potth2010, Gries2011, Ito2013, Kang2018, Harri2020}, its application  to the inverse scattering problem for periodic media is still not known.

The rest of the paper is organized as follows. We formulate the  inverse problem of interest in Section~\ref{setup}. The new imaging functional and its resolution and stability analysis are discussed in Section~\ref{theory}.   A numerical study of the new imaging functional is presented in Section~\ref{numerical}.

\section{Electromagnetic  scattering from periodic media}
\label{setup}

We consider an anisotropic periodic medium in $\R^3$ that is unboundedly 2$\pi$-periodic  in 
 $x_1$- and $x_2$-directions and bounded in $x_3$-direction. Let $k>0$ be the wave number and $\varepsilon$ be a $3\times 3$ bounded matrix-valued function which represents the permittivity of the medium. We assume that $\varepsilon$ is $2\pi$-periodic in the $x_1$- and $x_2$-directions, and that in each period $(m_1\pi,(m_1+2)\pi)\times (m_2\pi,(m_2+2)\pi) \times \R$ for any $m_1,m_2 \in \Z$, it is equal to the identity matrix $I_3$ outside a compact set.
 This periodic medium is illuminated by an incident electric field $\E^{in}:\R^3 \to \C^3$, which is  generated by  source function  $\mathbf{J}: \R^3 \to \C^3 $. Their interaction gives rise to a scattered electric field $\u$ that is also  a function from $\R^3$ to $\C^3$. The total field $\E = \E^{in}+\u$ is assumed to satisfy the  Maxwell's equations
\begin{equation}
\label{pde1}
\curl \curl \E - k^2\varepsilon(\x) \E =  \mathbf{J}, \quad \x \in \R^3.
\end{equation}
 Now for $\alpha = (\alpha_1,\alpha_2) \in \R^2$, a function $\f :\R^3 \to \C^3$ is called $\alpha$-quasiperiodic if
$$
\f(x_1+m_12\pi,x_2+m_22\pi,x_3) = e^{2\pi i(\alpha_1m_1+\alpha_2m_2)}\f(x_1,x_2,x_3) \quad \text{for all } m_1, m_2 \in \Z.
$$
Fixing $\alpha = (\alpha_1,\alpha_2)$, we use $N$ $\alpha$-quasiperiodic incident electric fields to illuminate the periodic medium. Denote these incident fields by $\E^{in}(\cdot,l)$ for $l = 1,\dots N$. 
More specifically, $\E^{in}(\cdot,l)$ satisfy 
$$
\curl\curl \E^{in}(\cdot,l) - k^2\E^{in}(\cdot,l) = \mathbf{J}(\cdot, l), \quad l = 1,\dots N.
$$
 Following the usual approach   we look for $\alpha$-quasiperiodic scattered fields $\u(\cdot,l)$. We can 
 rewrite~\eqref{pde1} for $\u(\cdot,l)$ as follows
 \begin{align}
 \label{pde}
 \curl\curl \u(\cdot,l)  - k^2 \u(\cdot,l)  = k^2(\varepsilon(\x) - I_3)(\u(\cdot,l)  + \E^{in}(\cdot,l)).
 \end{align} 
 We complete the scattering problem with the Rayleigh radiation condition. To this end we first introduce some notations.
Let $h > 0$ be such that
$$
h > \sup\{ |x_3|: \x = (x_1,x_2,x_3)^\top \in \text{supp}(\varepsilon - I_3) \}.
$$
Now, for $\rho \geq h$, we denote  
$$
\Omega:= (-\pi,\pi)^2\times \R, \quad \Omega_h:= (-\pi,\pi)^2 \times (-h,h), \quad \Gamma_{\pm\rho}:=(-\pi,\pi)^2 \times \{ \pm\rho\}.
$$
In addition to  (\ref{pde}), the scattered electric fields $\u(\cdot,l)$ satisfy the Rayleigh radiation condition, i.e.,
\begin{equation}
\label{rc}
\u(\x,l) = \left\{ \begin{array}{ll} \sum_{j\in \Z^2} \u_j^+(l)e^{i(\alpha_{1,j}x_1 + \alpha_{2,j}x_2 + \beta_j(x_3-h)} & \text{if}\ x_3 > h, \\ \sum_{j\in \Z^2}\u_j^-(l)e^{i(\alpha_{1,j}x_1 + \alpha_{2,j}x_2 - \beta_j(x_3+h)} & \text{if}\ x_3 < -h, \end{array} \right.
\end{equation}
where
$$
\alpha_{1,j} = \alpha_1 + j_1, \quad \alpha_{2,j} = \alpha_2 + j_2, \quad
\alpha_j = (\alpha_{1,j},\alpha_{2,j},0)^\top 
$$
and \begin{equation}
\label{beta}
\beta_j = \left\{ \begin{array}{rl} \sqrt{k^2-|\alpha_{j}|^2},& |\alpha_j| \leq k,\\
i\sqrt{|\alpha_j|^2-k^2},&   |\alpha_j| > k, \end{array}\right.
\end{equation}
for all $j = (j_1,j_2) \in \Z^2$. 
The sequences of coefficients $(\u_j^+(l))_j$ and $(\u_j^-(l))_j$ are called the Rayleigh sequences of $\u(\cdot,l)$ and can be computed by
\begin{equation}
\label{ray_coeff}
\u_j^{\pm}(l) = \frac{1}{4\pi^2} \int_{\Gamma_{\pm r}} \u(\x,l)e^{-i(\alpha_{1,j}x_1 + \alpha_{2,j}x_2 \pm \beta_j(x_3\mp h))}\d s(\x)
\end{equation}
for any $r \geq h$.

Note that all but finitely many terms in (\ref{rc}) are exponentially decaying, which helps us easily deduce pointwise absolute convergence of the series.
The exponentially decaying terms in  (\ref{rc})  are called evanescent modes and the terms corresponding to real $\beta_j$'s are called propagating modes.  Moreover, we need $\beta_j$ to be nonzero for all $j \in\mathbb{Z}^2$ or $k$ is not a Wood's anomaly.  The technical reason behind this assumption is that the representation of the $\alpha$-quasiperiodic  Green's function we use in~\eqref{green}  is not well-defined at a Wood's anomaly. 
For the study of the inverse problem in this paper  we assume that the direct problem~\eqref{pde}-\eqref{rc}  is well-posed. 
We refer to~\cite{Bao2022, Nguye2015} for studies on well-posedness of the direct problem~\eqref{pde}-\eqref{rc}. 

Note that since the scattering medium is $2\pi$-periodic in $x_1$ and $x_2$, finding its geometry in $\Omega$ is sufficient. Denote by $D$ the geometry of the medium in $\Omega$, i.e.
$$
D:= \text{supp}(\varepsilon - I_3) \cap \Omega.
$$
We aim to solve the following inverse problem.\\

\noindent
\textbf{Inverse Problem.} Given measurement of multiple scattered electric fields $\u(\cdot,l)$ on $\Gamma_{\pm\rho}$ corresponding to multiple incident fields $\E^{in}(\cdot,l)$, $l= 1,\dots, N$, find $D$ in $\Omega_h$.


\section{A new imaging functional}
\label{theory}
We will introduce a new imaging functional and analyze its behavior in this section. To this end we first need   the quasiperiodic Green's tensor of the direct problem. 
For $\x-\y \neq (2\pi m_1,2\pi m_2,0)$, for all $ m_1,m_2 \in \Z$,  the $\alpha$-quasiperiodic Green's tensor of the direct problem is given by (see, e.g., \cite{Sandf2010})
$$
\G(\x,\y) := \Phi(\x,\y)I_3 + \frac{1}{k^2} \nabla_{\x} \div_{\x} (\Phi(\x,\y)I_3),
$$
where the divergence is  columnwise and the gradient is componentwise, and  $\Phi(\x,\y)$ is the $\alpha$-quasiperiodic Green's function of the scalar Helmholtz problem
\begin{equation}
\label{green}
\Phi(\x,\y):= \frac{i}{8\pi^2}\sum_{j\in\Z^2} \frac{1}{\beta_j}e^{i(\alpha_{1,j}(x_1-y_1) + \alpha_{2,j}(x_2-y_2) + \beta_j|x_3-y_3|)},
\end{equation}
with Rayleigh coefficients given by 
\begin{align}
\label{ray}
r_j^{\pm}(\y) = \frac{i}{8\pi^2\beta_j}e^{-i\alpha_{1,j}y_1 -i\alpha_{2,j}y_2 + i\beta_j(h \mp y_3)}.
\end{align}



\begin{lemma}
\label{lem1}
For $\z \in \Omega_h$, the columns of $\G(\cdot,\z)$ satisfy the Rayleigh radiation condition (\ref{rc}). Let $G_{mn}(\cdot,\z)$ be the entry on the $m$-th row, $n$-th column of $\G(\cdot,\z)$. Then, the Rayleigh sequences $(g^{\pm}_{mn})_j(\z)$ of $G_{mn}(\cdot,\z)$ can be given as an expression in terms of $r_j^{\pm}(\z)$,
$$
(g^{\pm}_{mn})_j(\z) = \left( \delta_{mn} - \frac{\gamma^{\pm}_{m,j}\gamma^{\pm}_{n,j}}{k^2} \right) r^{\pm}_j(\z),   \quad  j \in \Z^2,
$$
where $\delta_{mn}$ is the Kronecker delta and
$$
\gamma^{\pm}_{n,j} := \left\{ \begin{array}{ll} \alpha_{n,j} & \text{if}\ n=1,2, \\ \pm \beta_j & \text{if}\ n=3. \end{array} \right.
$$
\end{lemma}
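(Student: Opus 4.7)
The plan is to argue component-by-component. From the definition of the tensor $\G$, and since $(\Phi I_3)$ has $n$-th column $\Phi\mathbf{e}_n$ with divergence $\partial_{x_n}\Phi$, the $(m,n)$-entry reads
$$
G_{mn}(\x,\z) = \delta_{mn}\,\Phi(\x,\z) + \frac{1}{k^2}\,\partial_{x_m}\partial_{x_n}\Phi(\x,\z).
$$
So it suffices to show that $\Phi(\cdot,\z)$ admits a Rayleigh expansion with coefficients $r_j^\pm(\z)$ and then differentiate termwise.

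First I would rewrite the series (\ref{green}) for $\x$ in the radiating regions. For $x_3 > h$ one has $|x_3-z_3| = x_3-z_3$ since $\z\in\Omega_h$, so splitting $\beta_j(x_3-z_3) = \beta_j(x_3-h) + \beta_j(h-z_3)$ and collecting the $\z$-dependent factors produces exactly the coefficient $r_j^+(\z)$ of (\ref{ray}) in front of $e^{i(\alpha_{1,j}x_1+\alpha_{2,j}x_2+\beta_j(x_3-h))}$. The analogous rearrangement for $x_3<-h$, writing $|x_3-z_3| = -(x_3+h)+(h+z_3)$, yields $r_j^-(\z)$ in front of $e^{i(\alpha_{1,j}x_1+\alpha_{2,j}x_2-\beta_j(x_3+h))}$. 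This already establishes that the columns of $\G(\cdot,\z)$ satisfy (\ref{rc}) in the scalar part and exhibits $\Phi$'s Rayleigh sequences as $r_j^\pm(\z)$.

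Next I would apply $\partial_{x_m}\partial_{x_n}$ termwise. Each summand depends on $\x$ only through the exponential $e^{i\gamma^\pm_j\cdot\x}$, where the wave vector $\gamma^\pm_j = (\alpha_{1,j},\alpha_{2,j},\pm\beta_j)^\top$ has components matching the $\gamma^\pm_{n,j}$ defined in the lemma. Each $\partial_{x_n}$ pulls out a factor $i\gamma^\pm_{n,j}$, so $\partial_{x_m}\partial_{x_n}$ pulls out $-\gamma^\pm_{m,j}\gamma^\pm_{n,j}$. Combined with the $\delta_{mn}\Phi$ term and the $1/k^2$ prefactor, the coefficient in front of the exponential in the resulting Rayleigh series for $G_{mn}(\cdot,\z)$ is exactly $\bigl(\delta_{mn} - k^{-2}\gamma^\pm_{m,j}\gamma^\pm_{n,j}\bigr)r_j^\pm(\z)$. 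Uniqueness of the Rayleigh sequences (they are Fourier coefficients of the restriction to $\Gamma_{\pm r}$, by (\ref{ray_coeff})) then gives the stated formula.

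The only delicate point is justifying termwise differentiation, and this I expect to be the main technical step. For $\z\in\Omega_h$ and $x_3>h$ one has $x_3-h>0$; for large $|j|$, $\beta_j = i\sqrt{|\alpha_j|^2-k^2}$ so $|e^{i\beta_j(x_3-h)}|$ decays like $e^{-|\alpha_j|(x_3-h)}$, while the differentiation only introduces polynomial growth of order $|\alpha_j|^2$ through $\gamma^\pm_{m,j}\gamma^\pm_{n,j}$. Hence the differentiated series converges absolutely and uniformly on any compact subset of $\{x_3>h\}$, legitimizing the exchange of derivative and sum. The same argument applies on $\{x_3<-h\}$, completing the proof.
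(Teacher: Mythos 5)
Your proof is correct and follows essentially the same route as the paper: rearrange the modal series for $\Phi(\cdot,\z)$ in the regions $x_3>h$ and $x_3<-h$ to expose the coefficients $r_j^\pm(\z)$, then apply $\nabla_\x\div_\x$ termwise so that each derivative pulls out a factor $i\gamma^\pm_{n,j}$. Your added justification of the termwise differentiation (exponential decay of the evanescent modes beating the polynomial growth of $\gamma^\pm_{m,j}\gamma^\pm_{n,j}$) and the appeal to uniqueness of Rayleigh coefficients are details the paper leaves implicit.
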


\begin{proof}
For $\x \in \Omega$ with $x_3 > h$ and $\z\in\Omega_h$,
$$
\Phi(\x,\z) = \sum_{j\in \Z^2} r_j^+(\z)e^{i(\alpha_{1,j}x_1 + \alpha_{2,j}x_2 + \beta_j(x_3-h))},
$$
thus,
\begin{align*}
\G(\x,\z) &= \Phi(\x,\y)I_3 + \frac{1}{k^2} \nabla_{\x} \div_{\x} (\Phi(\x,\y)I_3) \\
&= \sum_{j\in \Z^2}\left( \begin{bmatrix} 1 & 0 & 0 \\ 0 & 1 & 0 \\ 0 & 0 & 1 \end{bmatrix} + \frac{1}{k^2}\begin{bmatrix} \frac{\partial^2}{\partial x_1^2} & \frac{\partial^2}{\partial x_1\partial x_2} & \frac{\partial^2}{\partial x_1\partial x_3} \\ \frac{\partial^2}{\partial x_2\partial x_1}  & \frac{\partial^2}{\partial x_2^2} & \frac{\partial^2}{\partial x_2\partial x_3} \\ \frac{\partial^2}{\partial x_3\partial x_1} & \frac{\partial^2}{\partial x_3\partial x_2} & \frac{\partial^2}{\partial x_3^2} \end{bmatrix} \right)r_j^+(\z) e^{i(\alpha_{1,j}x_1 + \alpha_{2,j}x_2 + \beta_j(x_3-h))} \\
&= \sum_{j\in \Z^2}\left( \begin{bmatrix} 1 & 0 & 0 \\ 0 & 1 & 0 \\ 0 & 0 & 1 \end{bmatrix} - \frac{1}{k^2}\begin{bmatrix} \alpha_{1,j}^2 & \alpha_{1,j}\alpha_{2,j} & \alpha_{1,j}\beta_j \\ \alpha_{2,j}\alpha_{1,j}  & \alpha_{2,j}^2 & \alpha_{2,j}\beta_j \\ \beta_j\alpha_{1,j} & \beta_j\alpha_{2,j} & \beta_j^2 \end{bmatrix} \right)r_j^+(\z) e^{i(\alpha_{1,j}x_1 + \alpha_{2,j}x_2 + \beta_j(x_3-h))} \\
&= \sum_{j\in \Z^2}\left[ \delta_{mn} - \frac{\gamma^+_{m,j}\gamma^+_{n,j}}{k^2} \right]_{mn}r_j^+(\z) e^{i(\alpha_{1,j}x_1 + \alpha_{2,j}x_2 + \beta_j(x_3-h))}.
\end{align*}
Similarly, we can show that, for $x\in \Omega$ such that $x_3 < -h$,
$$
\G(\x,\z) = \sum_{j\in \Z^2}\left[ \delta_{mn} - \frac{\gamma^-_{m,j}\gamma^-_{n,j}}{k^2} \right]_{mn}r_j^-(\z) e^{i(\alpha_{1,j}x_1 + \alpha_{2,j}x_2 - \beta_j(x_3+h))}.
$$
This shows that, each entry $G_{mn}(\cdot,\z)$ of $\G(\cdot,\z)$ admits a Rayleigh series representation for $x_3 > h$ or $x_3 < -h$. Therefore, the columns of $\G(\cdot,\z)$ satisfy the Rayleigh radiation condition and the Rayleigh coefficients of $G_{mn}(\cdot,\z)$ are given by
$$
(g^{\pm}_{mn})_j(\z) = \left( \delta_{mn} - \frac{\gamma^{\pm}_{m,j}\gamma^{\pm}_{n,j}}{k^2} \right) r^{\pm}_j(\z).
$$
\end{proof}
For $j \in  \Z^2$, we  denote
$$
\g_j^{\pm}(\z) := \left[ (g^\pm_{mn})_j(\z)  \right]_{mn}, \quad
\h_j^{\pm}(\z) := \begin{bmatrix} \alpha_{1,j}(g^\pm_{31})_j &  \alpha_{1,j}(g^\pm_{32})_j & \alpha_{1,j}(g^\pm_{33})_j  \\ \alpha_{2,j}(g^\pm_{31})_j &  \alpha_{2,j}(g^\pm_{32})_j & \alpha_{2,j}(g^\pm_{33})_j \\ \pm\beta_j (g^\pm_{31})_j &  \pm\beta_j(g^\pm_{32})_j & \pm\beta_j(g^\pm_{33})_j \end{bmatrix}(\z).
$$
Now with the data $\u(\cdot,l)$ given on $\Gamma_{\pm\rho}$ for $l = 1, \dots, N$, we also know the Rayleigh coefficients $(\u_j^{\pm}(l))_j$ via~\eqref{ray_coeff}. 
We define the imaging  functional as
\begin{multline*}
\I(\z) := \sum_{l = 1}^N\left| \sum_{j\in\Z^2} \left( \left( \h_j^+(\z)  - 2\Re(\beta_j)\ \g^+_j(\z)\right)^*\u_j^+(l)  + u^+_{3,j}(l)\g^+_j(\z)^*\begin{bmatrix} \alpha_{1,j} \\ \alpha_{2,j} \\ \beta_j \end{bmatrix} \right. \right. \\
\left. \left. - \left( \h_j^-(\z)  + 2\Re(\beta_j)\ \g^-_j(\z)\right)^*\u_j^-(l)  - u^-_{3,j}(l) \g^-_j(\z)^* \begin{bmatrix} \alpha_{1,j} \\ \alpha_{2,j} \\ -\beta_j \end{bmatrix} \right) \right|^p,
\end{multline*}
where $\A^*$ denotes the transpose conjugate of the matrix $\A$. Here $p > 0$ is chosen to sharpen the reconstruction of the imaging functional (e.g., $p = 3$ works well in the numerical simulations.) 

\begin{remark}
\label{remark1}
 We note that the Rayleigh sequences $(r_j^{\pm}(\z))_j$ in~\eqref{ray}  are exponentially decaying as $|j|$ increases and $\beta_j$ is complex-valued.  For $m,n = 1,2,3$, the sequences $\left( (g^{\pm}_{mn})_j(\z)\right)_j$ involve $r_j^{\pm}(\z)$ multiplied by $\beta_j$ and $\alpha_{1,j}$ or $\alpha_{2,j}$. Thus these sequences    are quickly  decaying as $|j|$ increases and $\beta_j$ is complex-valued. This property  holds for all $\z \in \Omega_h$.
This leads to the fact that $\g^{\pm}_j(\z) $ and $\h^{\pm}_j(\z)$ also have quickly decaying entries for complex-valued $\beta_j$'s. Therefore, $\mathcal{I}(\z)$ is well-defined  and that only a finite number of  terms corresponding to real-valued $\beta_j$'s may make significant contributions to  $\mathcal{I}(\z)$ and the terms corresponding to complex-valued $\beta_j$'s can be essentially ignored. This is confirmed  in the numerical study. 
\end{remark}

Define the Sobolev spaces $H_\alpha(\curl,\Omega_h)$ and $H_{\alpha,\text{loc}}(\curl,\Omega)$ as
\begin{align*}
H_\alpha(\curl,\Omega_h) &:= \left\{ \mathbf{w} \in [L^2(\Omega_h)]^3: \curl \mathbf{w} \in [L^2(\Omega_h)]^3 \ \text{and}\ \mathbf{w} = \mathbf{W}|_{\Omega_h}\ \text{for some $\alpha$-quasiperiodic $\mathbf{W}$} \right\}, \\
H_{\alpha,\text{loc}}(\curl,\Omega) &= \left\{ \mathbf{w} \in [L^2_{\text{loc}}(\Omega)]^3: \curl \mathbf{w} \in [L^2_{\text{loc}}(\Omega)]^3 \ \text{and}\ \mathbf{w} = \mathbf{W}|_{\Omega}\ \text{for some $\alpha$-quasiperiodic $\mathbf{W}$} \right\}.
\end{align*}
The unique weak solution $\u \in H_{\alpha,\text{loc}}(\curl,\Omega)$ of the direct problem (\ref{pde})-(\ref{rc}) satisfies the Rayleigh radiation condition (\ref{rc}) and
\begin{equation}\label{var}
\int_\Omega \curl \u(\x) \cdot \curl \overline{\v(\x)} - k^2 \u(\x) \cdot \overline{\v(\x)}\, \d\x = k^2\int_D(\varepsilon(\x) - I_3)(\u(\x) + \E^{in}(\x))\cdot \overline{\v(\x)}\, \d\x,
\end{equation}
for all $\v \in H_\alpha(\curl,\Omega_h)$ with compact support.  It is known that $\u$  also satisfies 
 the volume integro-differential equation (see, e.g., \cite{Nguye2015})
\begin{equation}\label{ls}
\u(\x) = (k^2 + \nabla_\x\div_\x)\int_D \Phi(\x,\y) (\varepsilon(\y) - I_3)\E(\y)\,\d\y.
\end{equation}
 The equivalence is understood in the sense that, if $\u \in H_{\alpha,\text{loc}}(\curl,\Omega)$ satisfies (\ref{rc}) and (\ref{var}) then $\u|_{\Omega_h}$ belongs to $H_\alpha(\curl,\Omega_h)$ and solves (\ref{ls}), and conversely, if $\u|_{\Omega_h} \in H_\alpha(\curl,\Omega_h)$ solves (\ref{ls}) then it can be extended into a solution of (\ref{var}) in $H_{\alpha,\text{loc}}(\curl,\Omega)$ that also satisfies the radiation condition (\ref{rc}).

We study the resolution of the imaging functional in the following theorem.

\begin{theorem} 
\label{resolution}
The imaging functional $\mathcal{I}(\z)$ satisfies
$$
\I(\z) = \sum_{l = 1}^N \left|  \frac{k^2}{2\pi^2} \int_D \F(\z,\y)\,  (\varepsilon(\y) - I_3)\E(\y,l)\, \d\y \right|^p, \quad \z \in \Omega,
$$
where
$$
\F(\x,\y) = F(\x,\y)I_3 + \frac{1}{k^2} \nabla_{\x} \div_{\x}(F(\x,\y)I_3),
$$
with 
$$
F(\x,\y) = \frac{k}{4\pi} j_0(k|\x-\y|) +  \sum_{ 0 \neq  j \in \Z^2}   \frac{ke^{-i2\pi\alpha \cdot j}}{4\pi}  j_0\left(k \left | \x-\y + (2\pi j_1, 2\pi j_2, 0)^\top \right| \right)   
$$
($j_0$ is the spherical Bessel function of the first kind of order $0$).

\end{theorem}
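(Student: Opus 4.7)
The plan is to substitute the modal representation of the scattered fields into $\mathcal{I}(\z)$, exchange the sum with the volume integral over $D$, and identify the resulting matrix kernel with a scalar multiple of $\F(\z,\y)$. To begin, the volume integro--differential equation (\ref{ls}) together with Lemma~\ref{lem1} gives
$$
\u_j^\pm(l) = k^2\int_D \g_j^\pm(\y)\,(\varepsilon(\y)-I_3)\E(\y,l)\,\d\y,
$$
where, setting $\boldsymbol{\gamma}_j^\pm := (\alpha_{1,j},\alpha_{2,j},\pm\beta_j)^\top$, one has $\g_j^\pm(\y) = P_j^\pm\, r_j^\pm(\y)$ with the symmetric matrix $P_j^\pm := I_3 - \boldsymbol{\gamma}_j^\pm(\boldsymbol{\gamma}_j^\pm)^\top/k^2$. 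Inserting this into the definition of $\mathcal{I}(\z)$ and interchanging the absolutely convergent series with the integral, the quantity inside the absolute value becomes $k^2\int_D \mathcal{K}(\z,\y)(\varepsilon(\y)-I_3)\E(\y,l)\,\d\y$, with $\mathcal{K}(\z,\y)=\sum_{j\in\Z^2}(M_j^+(\z,\y)-M_j^-(\z,\y))$ and
$$
M_j^\pm(\z,\y) = \overline{r_j^\pm(\z)}\,r_j^\pm(\y)\,\overline{P_j^\pm}\,B_j^\pm\,P_j^\pm,\qquad B_j^\pm := \mathbf{e}_3\,\overline{\boldsymbol{\gamma}_j^\pm}^\top + \boldsymbol{\gamma}_j^\pm\,\mathbf{e}_3^\top \mp 2\Re(\beta_j)\,I_3.
$$

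The next step is to collapse this infinite sum to a finite one over the propagating set $\Lambda := \{j\in\Z^2 : \beta_j\in\R\}$. The identity $|\boldsymbol{\gamma}_j^\pm|^2 = k^2$ makes $P_j^\pm$ a projection with $P_j^\pm\boldsymbol{\gamma}_j^\pm = 0 = (\boldsymbol{\gamma}_j^\pm)^\top P_j^\pm$. For propagating $j$, $\overline{P_j^\pm} = P_j^\pm$ and these null-space relations immediately give $P_j^\pm B_j^\pm P_j^\pm = \mp 2\beta_j P_j^\pm$, hence $M_j^\pm = \mp 2\beta_j\,\overline{r_j^\pm(\z)}\,r_j^\pm(\y)\,P_j^\pm$. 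For evanescent $j$ one has $\Re(\beta_j)=0$, $\overline{\boldsymbol{\gamma}_j^\pm} = \boldsymbol{\gamma}_j^\mp$, and $\overline{P_j^\pm} = P_j^\mp$. Using $\boldsymbol{\gamma}_j^+ - \boldsymbol{\gamma}_j^- = 2\beta_j\mathbf{e}_3$, both $B_j^+$ and $B_j^-$ simplify to $\tfrac{k^2}{2\beta_j}(P_j^- - P_j^+)$, and therefore
$$
\overline{P_j^\pm}B_j^\pm P_j^\pm = \tfrac{k^2}{2\beta_j}\,P_j^\mp(P_j^- - P_j^+)P_j^\pm = 0
$$
by $(P_j^\pm)^2 = P_j^\pm$, so $M_j^\pm \equiv 0$ for every evanescent index. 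Substituting the explicit formulas (\ref{ray}) for $r_j^\pm(\z)$ and $r_j^\pm(\y)$ then yields the finite-mode representation
$$
\mathcal{K}(\z,\y) = -\sum_{j\in\Lambda}\frac{e^{i\alpha_{1,j}(z_1-y_1)+i\alpha_{2,j}(z_2-y_2)}}{32\pi^4\,\beta_j}\Bigl[P_j^+ e^{i\beta_j(z_3-y_3)} + P_j^- e^{-i\beta_j(z_3-y_3)}\Bigr].
$$

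Finally, I would show that $\mathcal{K} = -\tfrac{1}{2\pi^2}\F$. Since $\tfrac{k}{4\pi}j_0(k|\cdot|) = \Im\Phi_0(\cdot,\mathbf{0})$ is smooth, entire, and satisfies $(\Delta+k^2)u=0$, the lattice sum defining $F$ yields a smooth $\alpha$-quasiperiodic solution of $(\Delta_\x+k^2)F=0$ on all of $\Omega$, admitting the decomposition $F(\x,\y) = \tfrac{1}{2i}(\Phi_\alpha(\x,\y) - \overline{\Phi_{-\alpha}(\x,\y)})$. Substituting the Rayleigh expansion (\ref{green}) of each piece, the evanescent contributions (for which $\overline{\beta_j}=-\beta_j$) cancel and one recovers $F(\x,\y) = \sum_{j\in\Lambda}\frac{\cos(\beta_j(x_3-y_3))}{8\pi^2\beta_j}e^{i\alpha_{1,j}(x_1-y_1)+i\alpha_{2,j}(x_2-y_2)}$. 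Applying $I_3 + \nabla_\z\div_\z/k^2$ to $FI_3$---exactly as in the proof of Lemma~\ref{lem1}---turns each scalar mode into its matrix-valued counterpart, yielding
$$
\F(\z,\y) = \sum_{j\in\Lambda}\frac{e^{i\alpha_{1,j}(z_1-y_1)+i\alpha_{2,j}(z_2-y_2)}}{16\pi^2\beta_j}\Bigl[P_j^+ e^{i\beta_j(z_3-y_3)} + P_j^- e^{-i\beta_j(z_3-y_3)}\Bigr],
$$
which agrees with $-2\pi^2\mathcal{K}(\z,\y)$ term by term. The overall minus sign is absorbed by $|\cdot|^p$ in $\mathcal{I}(\z)$, and summing over $l$ produces the claimed representation.

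The hard part is the evanescent cancellation $M_j^\pm \equiv 0$ for $j\notin\Lambda$: without it the kernel $\mathcal{K}$ would contain terms exponentially growing in $z_3,y_3$ that could never be absorbed into the smooth, finite-rank kernel $\F$. The compatibility identity $P_j^\mp(P_j^- - P_j^+)P_j^\pm = 0$, which stems from the common normalization $|\boldsymbol{\gamma}_j^\pm|^2 = k^2$ shared by both projectors, is the Maxwell-level analogue of the propagating-mode filtering exploited in the scalar case of~\cite{Nguye2023}.
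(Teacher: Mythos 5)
Your proof is correct, but it reaches the central identity by a genuinely different route than the paper. The paper first establishes a Helmholtz--Kirchhoff-type reciprocity identity for the Green's tensor: applying Green's second identity to columns of $\G(\cdot,\x_s)$ and $\overline{\G(\cdot,\x_t)}$ over $\Omega_h$ gives a boundary integral over $\Gamma_{\pm h}$ equal to $\G(\x_t,\x_s)-\G^*(\x_s,\x_t)$, and then evaluates that boundary integral mode by mode (using $\curl[\A\phi_j^\pm]=(i\boldsymbol{\gamma}_j^\pm\times\A)\phi_j^\pm$ and orthogonality of the $\phi_j^\pm$) to show the modal sum appearing in $\I(\z)$ equals $\tfrac{1}{-4i\pi^2}\bigl(\G(\z,\y)-\G^*(\y,\z)\bigr)$. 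You bypass the Green's identity entirely and verify the same modal identity by direct linear algebra on the coefficients, writing $\g_j^\pm=P_j^\pm r_j^\pm$ with the projector $P_j^\pm=I_3-\boldsymbol{\gamma}_j^\pm(\boldsymbol{\gamma}_j^\pm)^\top/k^2$ and collapsing $\overline{P_j^\pm}B_j^\pm P_j^\pm$ to $\mp 2\beta_j P_j^\pm$ (propagating) or $0$ (evanescent); I checked the projector identities, the sign conventions in $B_j^\pm$, and the final constant $\F=-2\pi^2\mathcal{K}$, and they all work out. Each approach buys something: yours makes the vanishing of the evanescent contributions an exact algebraic fact, which sharpens the paper's Remark~\ref{remark1} (there the evanescent terms are only argued to be rapidly decaying and numerically negligible), and it is more elementary; the paper's route explains structurally \emph{why} the particular combination of $\h_j^\pm$ and $\g_j^\pm$ appears in the definition of $\I(\z)$ --- it is engineered to reproduce the Helmholtz--Kirchhoff boundary pairing, so the kernel is forced to be $\tfrac{1}{2i}(\G-\G^*)$ without any case analysis. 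Your endgame (identifying the finite propagating-mode sum with the Rayleigh expansion of $F=\tfrac{1}{2i}(\Phi_\alpha-\overline{\Phi_{-\alpha}})$ and then with the lattice sum of $\tfrac{k}{4\pi}j_0$) is the paper's last step run in the opposite direction; like the paper, you invoke the spatial representation of $\Phi$ without addressing the conditional convergence of that lattice sum, so you are on equal footing there.
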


\begin{remark}
For a fixed $\y$, we numerically observe that  the series in $F(\x,\y)$ makes a relatively small perturbation to $\frac{k}{4\pi} j_0(k|\x-\y|)$ that  strongly peaks as $\x$ is close to $\y$ and has much smaller values otherwise. Thus the behavior of  $|F(\x,\y)|$ is pretty  similar to that of $\frac{k}{4\pi} j_0(k|\x-\y|)$ as it can be seen in Figure~\ref{j0F}. A two-dimensional version of $F(\x,\y)$ was studied in~\cite{Nguye2023} with similar behaviors.  We can thus expect $\F(\x,\y)$ to have a similar behavior. Figure \ref{Fp} shows the values of $|\F(\x,0)\mathbf{q}|_2$ when $k = 2\pi$, $\alpha_1 = \alpha_2 = 0$ and for  $\mathbf{q} = (0,0,1)^\top$ and  $\mathbf{q} = (1,1,1)^\top$. In both cases, $|\F(\x,0)\mathbf{q}|_2$ behaves as expected. We also note that similar behaviors of $|\F(\x,0)\mathbf{q}|_2$ were observed for  $\mathbf{q} = (1,0,0)^\top$ and  $\mathbf{q} = (0,1,0)^\top$.  Therefore, we expect from Theorem~\ref{resolution} that the imaging functional $\I(\z)$ takes larger values as $\z$ is inside $D$ and that $\I(\z)$ is much smaller  for $\z$ is outside $D$.
This is indeed  confirmed in the numerical study.
\end{remark}

\begin{figure}[h!]
\centering
\subfloat[ $|\frac{k}{4\pi}j_0(k|\x|)|$]{\includegraphics[width=6cm]{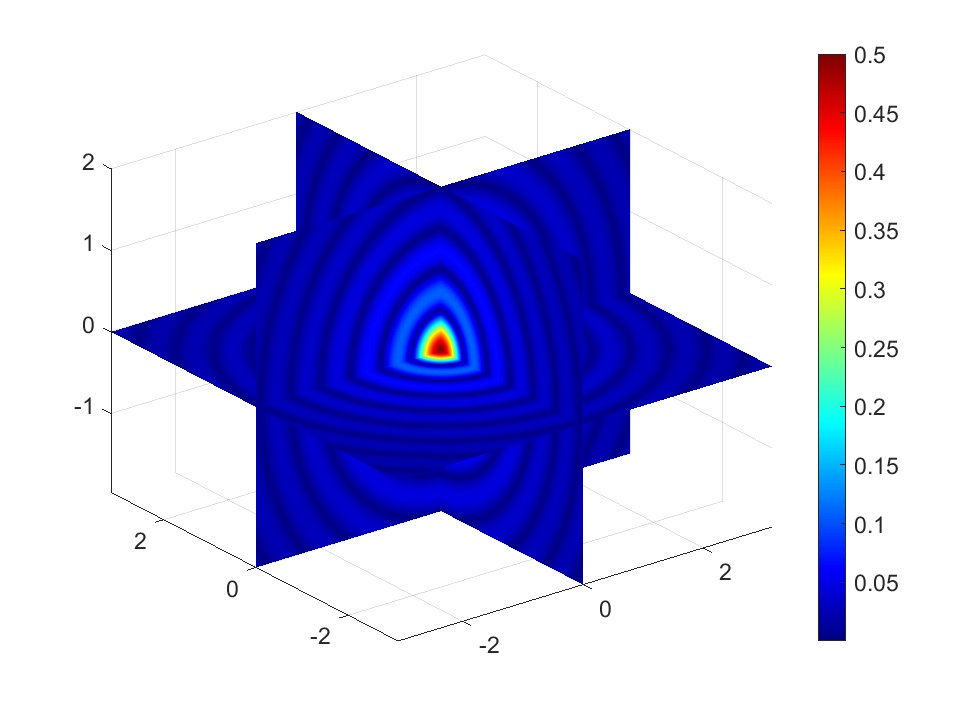}}
\subfloat[$|F(\x,0)|$]{\includegraphics[width=6cm]{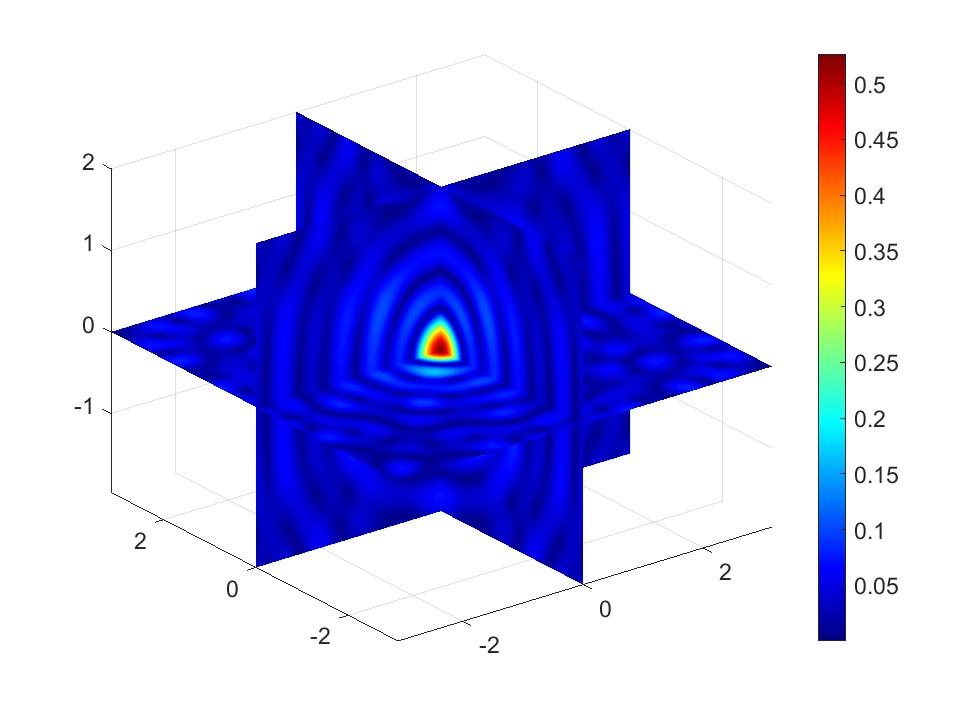}}\\
\subfloat[Top view of (a) ]{\includegraphics[width=6cm]{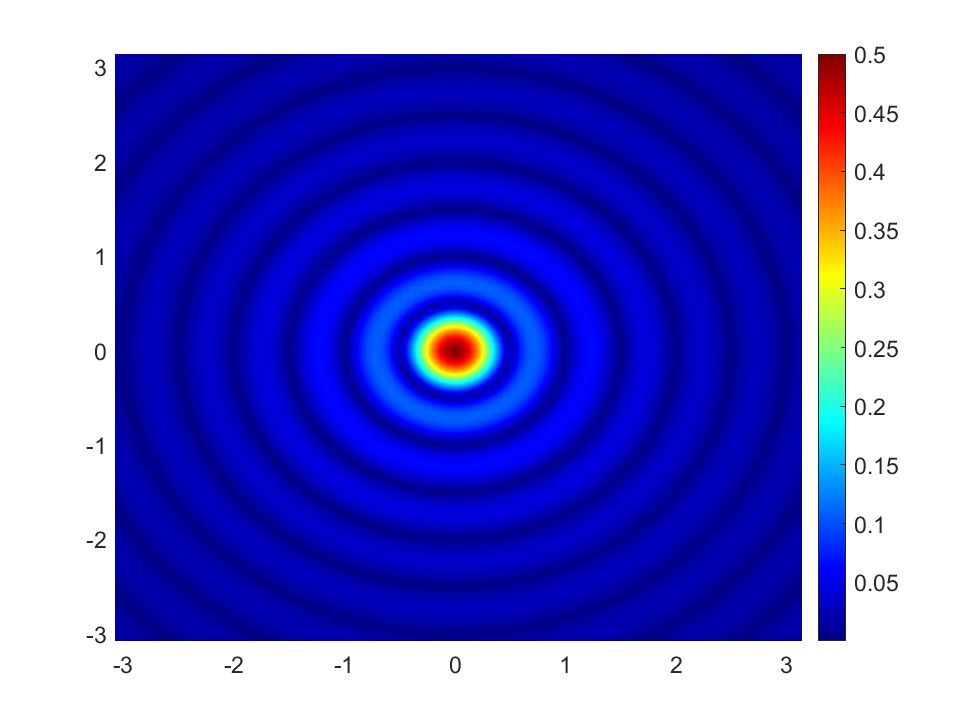}}
\subfloat[ Top view of (b)]{\includegraphics[width=6cm]{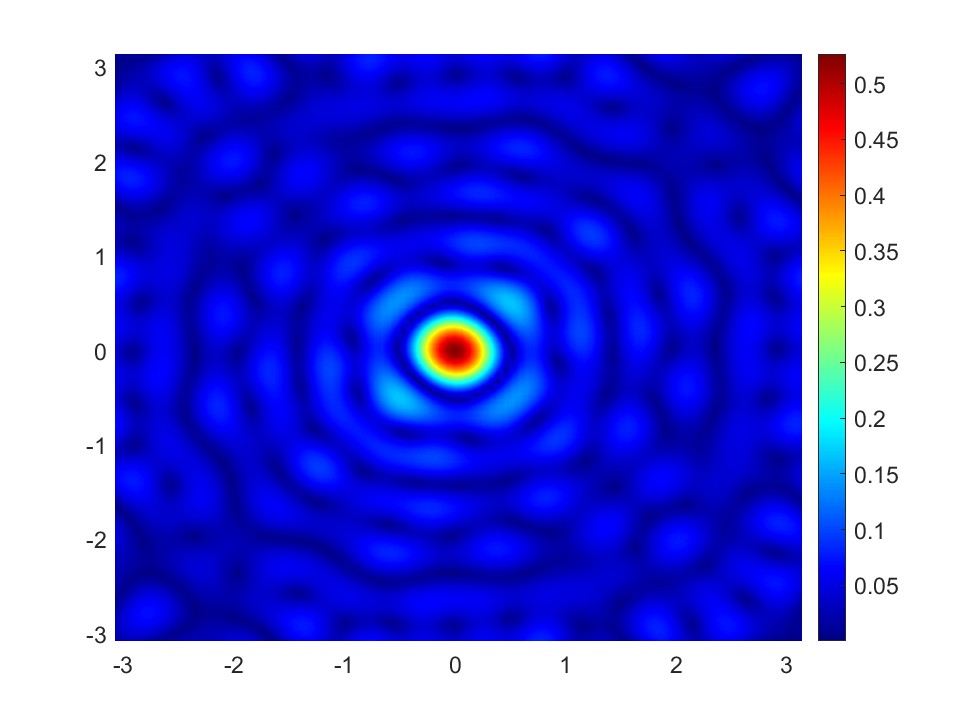}}
\caption{$\left|\frac{k}{4\pi}j_0(k|\x|)\right|$ and $|F(\x,0)|$, $k = 2\pi$, $\alpha_1=\alpha_2=0$.}
\label{j0F}
\end{figure}

\begin{figure}[h!]
\centering
\subfloat[ $\mathbf{q} =(0,0,1)^\top$]{\includegraphics[width=6cm]{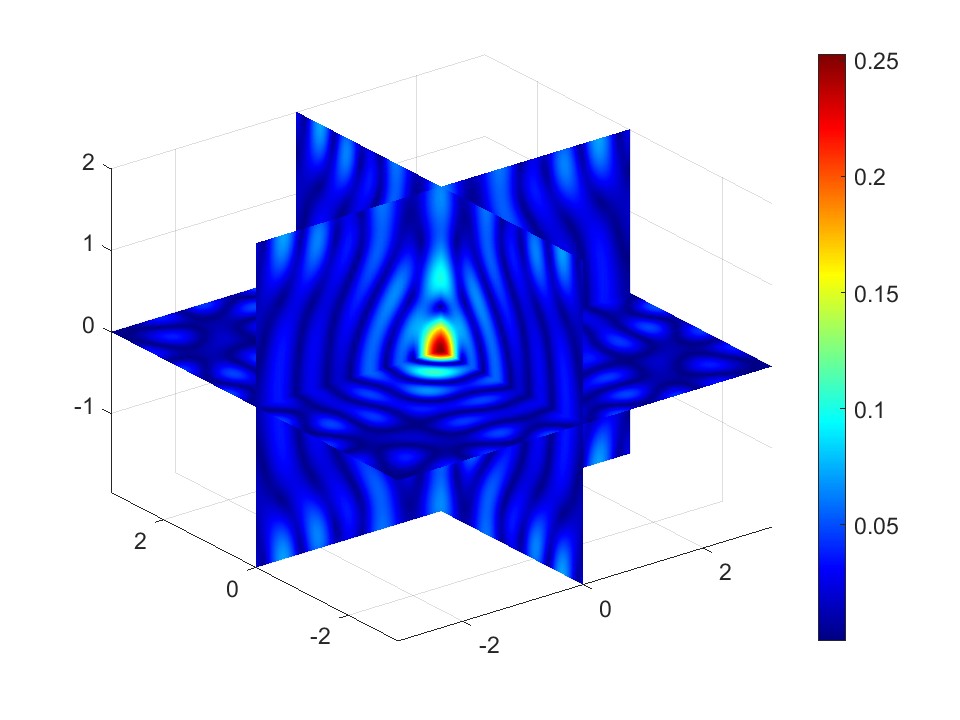}}
\subfloat[$\mathbf{q} =(1,1,1)^\top$]{\includegraphics[width=6cm]{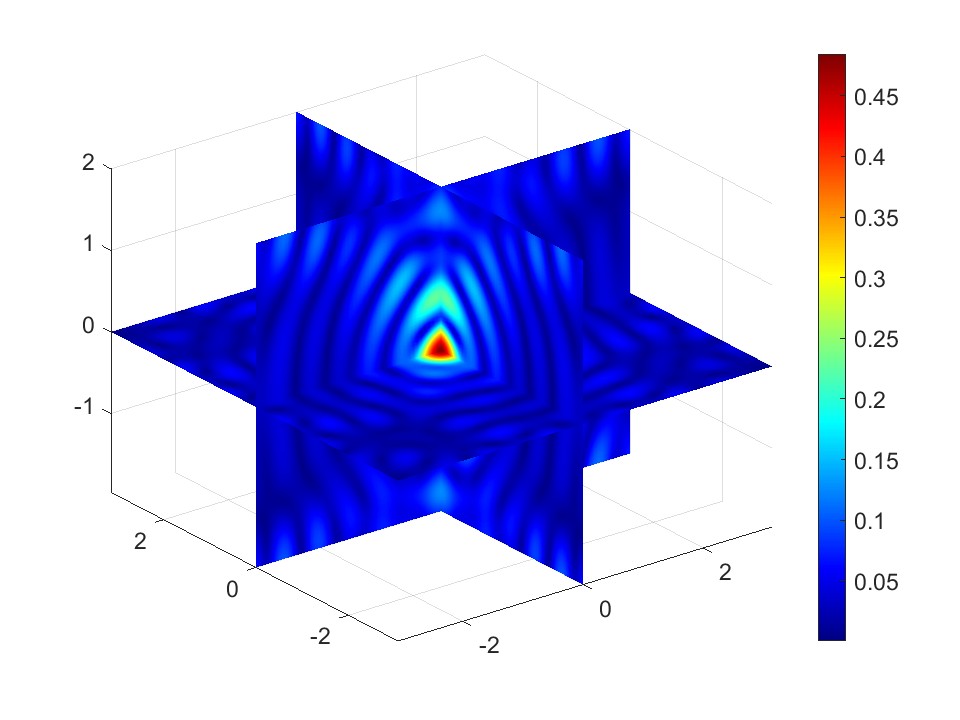}}\\
\subfloat[Top view of (a) ]{\includegraphics[width=6cm]{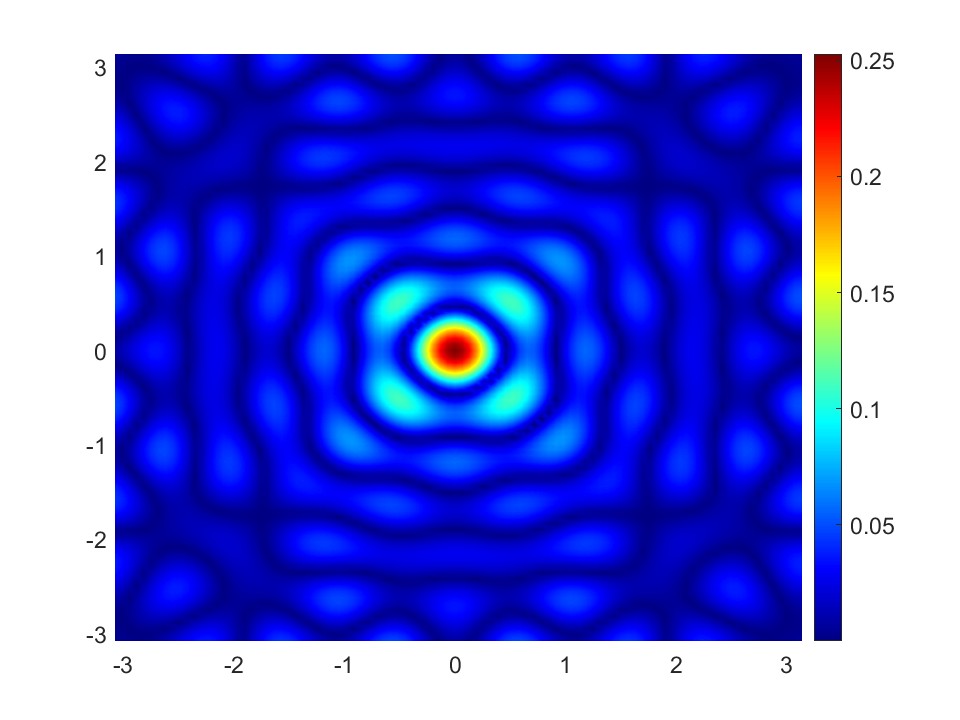}}
\subfloat[ Top view of (b)]{\includegraphics[width=6cm]{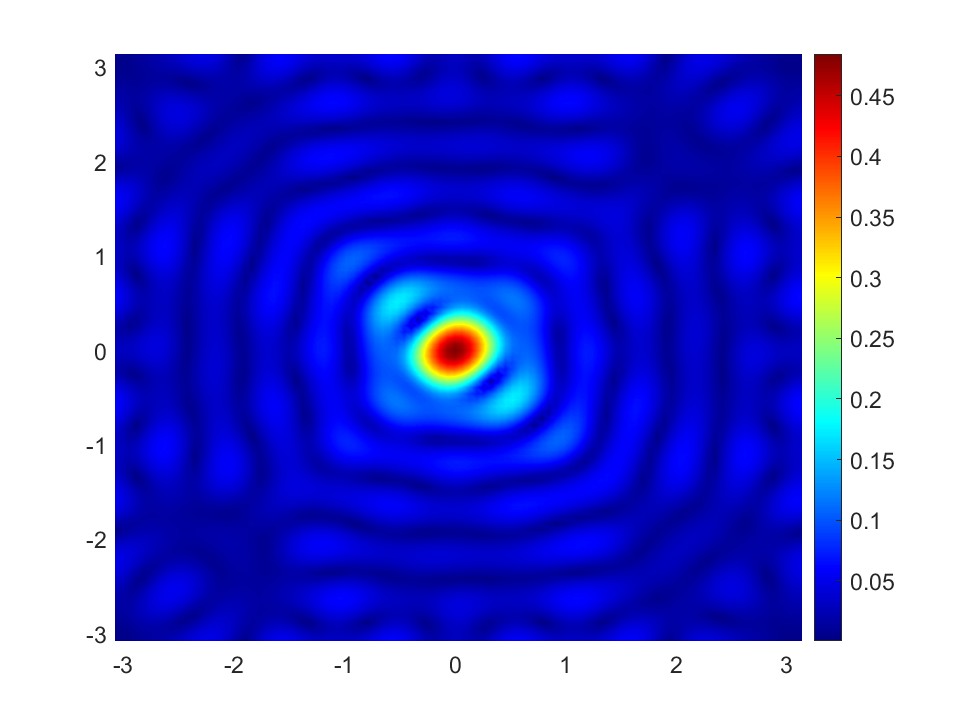}}
\caption{$|\F(\x,0)\mathbf{q}|_2$ for different vectors $\mathbf{q}$, $k = 2\pi$, $\alpha_1=\alpha_2=0$.}
\label{Fp}
\end{figure}

\begin{proof}
Let $\mathbf{G}_n$ be the $n$-th column of $\G$, $n = 1,2,3$. For all $\x_s,\x_t \in \Omega_h$ and $\x \in \overline{\Omega_h}$, we have
$$
\curl\curl \mathbf{G}_n(\x,\x_s) -k^2 \mathbf{G}_n(\x,\x_s) = \delta(\x-\x_s)\e_n,
$$
and dot-multiplying both sides by $\overline{\mathbf{G}_m(\x,\x_t)}$, $m = 1,2,3$ gives
$$
\curl \curl \mathbf{G}_n(\x,\x_s)\cdot \overline{\mathbf{G}_m(\x,\x_t)} -k^2 \mathbf{G}_n(\x,\x_s)\cdot \overline{\mathbf{G}_m(\x,\x_t)} = \delta(\x-\x_s)\e_n\cdot \overline{\mathbf{G}_m(\x,\x_t)}.
$$
Integrating by part over $\Omega_h$ with respect to $\x$ we obtain
\begin{multline}\label{hki:1}
\int_{\Omega_h} \curl \mathbf{G}_n(\x,\x_s)\cdot \curl \overline{\mathbf{G}_m(\x,\x_t)}\, \d\x - k^2\int_{\Omega_h}\mathbf{G}_n(\x,\x_s)\cdot \overline{\mathbf{G}_m(\x,\x_t)}\, \d\x \\
 + \int_{\partial\Omega_h}\nu(\x) \times \curl \mathbf{G}_n(\x,\x_s)\cdot \overline{\mathbf{G}_m(\x,\x_t)} \, \d s(\x) = \overline{G}_{nm}(\x_s,\x_t).
\end{multline}
Similarly, $\overline{\mathbf{G}_m(\x,\x_t)}$, $m = 1,2,3$ satisfies
$$
\curl \curl \overline{\mathbf{G}_m(\x,\x_t)} -k^2 \overline{\mathbf{G}_m(\x,\x_t)} = \delta(\x-\x_t)\e_m,
$$
and dot-multiplying both sides with $\mathbf{G}_n(\x,\x_s)$, $n = 1,2,3$ gives
$$
\curl \curl \overline{\mathbf{G}_m(\x,\x_t)}\cdot \mathbf{G}_n(\x,\x_s) -k^2 \overline{\mathbf{G}_m(\x,\x_t)}\cdot \mathbf{G}_n(\x,\x_s) = \delta(\x-\x_t)\e_m\cdot \mathbf{G}_n(\x,\x_s).
$$
Integrating by part over  $\Omega_h$ with respect to $\x$ leads to
\begin{multline}\label{hki:2}
\int_{\Omega_h} \curl \overline{\mathbf{G}_m(\x,\x_t)}\cdot \curl \mathbf{G}_n(\x,\x_s)\, \d\x - k^2\int_{\Omega_h}\overline{\mathbf{G}_m(\x,\x_t)}\cdot \mathbf{G}_n(\x,\x_s)\, \d\x \\
 + \int_{\partial\Omega_h}\nu(\x) \times \curl \overline{\mathbf{G}_m(\x,\x_t)}\cdot \mathbf{G}_n(\x,\x_s) \, \d s(\x) = G_{mn}(\x_t,\x_s).
\end{multline}
Subtracting (\ref{hki:1}) from (\ref{hki:2}) we obtain
\begin{multline*}
\int_{\partial\Omega_h} \nu(\x) \times \curl \overline{\mathbf{G}_m(\x,\x_t)}\cdot \mathbf{G}_n(\x,\x_s) - \nu(\x) \times \curl \mathbf{G}_n(\x,\x_s)\cdot \overline{\mathbf{G}_m(\x,\x_t)} \, \d s(\x) \\ = G_{mn}(\x_t,\x_s) - \overline{G_{nm}(\x_s,\x_t)},
\end{multline*}
or in matrix form
\begin{align}
\label{HK}
& \int_{\partial\Omega_h} (\nu(\x) \times \curl \G(\x,\x_t))^* \G(\x,\x_s) - \G(\x,\x_t)^* \nu(\x) \times \curl \G(\x,\x_s) \, \d s(\x) \nonumber \\
& = \G(\x_t,\x_s) - \G^*(\x_s,\x_t),
\end{align}
{\color{black}
where $\curl$ and $\times$ are taken columnwise.
}
Since $\G$ is $\alpha$-quasiperiodic, the integral on the left-hand side can be taken on just $\Gamma_{+h}\cup \Gamma_{-h}$.
Recall from Lemma~\ref{lem1} that the columns of $\G(\cdot,\y)$ satisfy the Rayleigh radiation condition. Letting
$$
\phi^{\pm}_j(\x) := e^{i(\alpha_{1,j}x_1 + \alpha_{2,j}x_2 \pm \beta_j(x_3 \mp h))},
$$ 
we compute
\begin{align*}
&\int_{\Gamma_{\pm h}}   (\nu(\x) \times \curl \G(\x,\x_t))^* \G(\x,\x_s) - \G(\x,\x_t)^* \nu(\x) \times \curl \G(\x,\x_s) \, \d s(\x)\\
&= \int_{\Gamma_{\pm h}} \left( \begin{bmatrix} 0 \\ 0 \\ \pm 1 \end{bmatrix} \times \sum_{j \in \Z^2} \curl \left[ \g^{\pm}_j(\x_t)\phi^{\pm}_j(\x) \right] \right)^*\sum_{j \in \Z^2} \g^{\pm}_j(\x_s)\phi^{\pm}_j(\x)\ \d s(\x) \\
&- \int_{\Gamma_{\pm h}} \left( \sum_{j \in \Z^2} \g^{\pm}_j(\x_t)\phi^{\pm}_j(\x) \right)^* \begin{bmatrix} 0 \\ 0 \\ \pm 1 \end{bmatrix} \times \sum_{j \in \Z^2} \curl \left[ \g^{\pm}_j(\x_s)\phi^{\pm}_j(\x) \right]\ \d s(\x).
\end{align*}
For any matrix $\A$ (independent of $\x$),  we obtain  from a direct calculation that
\begin{align*}
\curl \left[\A \phi^{\pm}_j(\x) \right] 
&= i\begin{bmatrix} a_{31}\alpha_{1,j} \mp a_{21}\beta_j  & a_{32}\alpha_{1,j} \mp a_{22}\beta_j  & a_{33}\alpha_{1,j} \mp a_{23}\beta_j \\
\pm a_{11} \beta_j  - a_{31}\alpha_{1,j}   &  \pm a_{12} \beta_j  - a_{32}\alpha_{1,j}  & \pm a_{13} \beta_j  - a_{33}\alpha_{1,j}  \\
a_{21}\alpha_{1,j}  - a_{11}\alpha_{2,j}  & a_{22}\alpha_{1,j}  - a_{12}\alpha_{2,j}  & a_{23}\alpha_{1,j}  - a_{13}\alpha_{2,j}  \\\end{bmatrix} \phi^{\pm}_j(\x) \\
&=  \left( i\begin{bmatrix} \alpha_{1,j} \\ \alpha_{2,j} \\ \pm \beta_j \end{bmatrix} \times \A \right)\phi^{\pm}_j(\x).
\end{align*}
Hence, combining with dominated convergence theorem, we have
\begin{align*}
&\int_{\Gamma_{\pm h}}   (\nu(\x) \times \curl \G(\x,\x_t))^* \G(\x,\x_s) - \G(\x,\x_t)^* \nu(\x) \times \curl \G(\x,\x_s) \ \d s(\x)\\
&= \int_{\Gamma_{\pm h}} \sum_{j \in \Z^2}   \left(  \begin{bmatrix} 0 \\ 0 \\ \pm 1 \end{bmatrix} \times \left( i\begin{bmatrix} \alpha_{1,j} \\ \alpha_{2,j} \\ \pm \beta_j \end{bmatrix} \times \g^{\pm}_j(\x_t) \right) \right)^*\overline{\phi^{\pm}_j(\x)}\sum_{j \in \Z^2} \g^{\pm}_j(\x_s)\phi^{\pm}_j(\x)\ \d s(\x) \\
&- \int_{\Gamma_{\pm h}} \sum_{j \in \Z^2}\g^{\pm}_j(\x_t)^* \overline{\phi^{\pm}_j(\x)} \sum_{j \in \Z^2} \left(\begin{bmatrix} 0 \\ 0 \\ \pm 1 \end{bmatrix} \times \left( i\begin{bmatrix} \alpha_{1,j} \\ \alpha_{2,j} \\ \pm \beta_j \end{bmatrix} \times \g^{\pm}_j(\x_s) \right) \right)\phi^{\pm}_j(\x) \ \d s(\x) \\
&= \sum_{j\in \Z^2}\sum_{j' \in \Z^2} \left(  \begin{bmatrix} 0 \\ 0 \\ \pm 1 \end{bmatrix} \times \left( i\begin{bmatrix} \alpha_{1,j} \\ \alpha_{2,j} \\ \pm \beta_j \end{bmatrix} \times \g^{\pm}_j(\x_t) \right) \right)^*\g^{\pm}_{j'}(\x_s) \int_{\Gamma_{\pm h}} \phi^{\pm}_{j'}(\x)\overline{\phi^{\pm}_j(\x)}\ \d s(\x) \\
&- \sum_{j \in \Z^2}\sum_{j' \in \Z^2}\g^{\pm}_j(\x_t)^* \left(\begin{bmatrix} 0 \\ 0 \\ \pm 1 \end{bmatrix} \times \left( i\begin{bmatrix} \alpha_{1,j'} \\ \alpha_{2,j'} \\ \pm \beta_{j'} \end{bmatrix} \times \g^{\pm}_{j'}(\x_s) \right) \right) \int_{\Gamma_{\pm h}}   \phi^{\pm}_{j'}(\x) \overline{\phi^{\pm}_j(\x)} \ \d s(\x).
\end{align*}
Note that using
$$
\int_{\Gamma_{\pm h}} \phi^{\pm}_{j'}(\x)\overline{\phi^{\pm}_j(\x)}\ \d s(\x) = \left\{ \begin{array}{ll} 4\pi^2 & \text{if}\ j = j',\\ 0 & \text{if}\ j \neq j', \end{array} \right.
$$
we have
\begin{align*}
&\int_{\Gamma_{\pm h}}   (\nu(\x) \times \curl \G(\x,\x_t))^* \G(\x,\x_s) - \G(\x,\x_t)^* \nu(\x) \times \curl \G(\x,\x_s) \ \d s(\x)\\
&= 4\pi^2  \sum_{j\in\Z^2}\left( \left(  \begin{bmatrix} 0 \\ 0 \\ \pm 1 \end{bmatrix} \times \left( i\begin{bmatrix} \alpha_{1,j} \\ \alpha_{2,j} \\ \pm \beta_j \end{bmatrix} \times \g^{\pm}_j(\x_t) \right) \right)^* \g^{\pm}_j(\x_s) \right. \\
&\left. - \g^{\pm}_j(\x_t)^*  \left(\begin{bmatrix} 0 \\ 0 \\ \pm 1 \end{bmatrix} \times \left( i\begin{bmatrix} \alpha_{1,j} \\ \alpha_{2,j} \\ \pm \beta_j \end{bmatrix} \times \g^{\pm}_j(\x_s) \right) \right) \right) \\
&= -i4\pi^2\sum_{j\in\Z^2}\left( \left( \pm \h_j^{\pm}(\x_t)  - \beta_j \g^{\pm}_j(\x_t) \right)^* \g^{\pm}_j(\x_s) + \g^{\pm}_j(\x_t)^* \left( \pm\h_j^{\pm}(\x_s)  - \beta_j \g^{\pm}_j(\x_s) \right) \right) \\
&= -i4\pi^2\sum_{j\in\Z^2}\left( \left( \pm \h_j^{\pm}(\x_t)  - 2\Re(\beta_j)\ \g^{\pm}_j(\x_t)  \right)^* \g^{\pm}_j(\x_s)  \pm \g^{\pm}_j(\x_t)^* \h_j^{\pm}(\x_s) \right).
\end{align*}
Therefore we obtain that
\begin{align}
\label{MHK}
&\int_{\partial \Omega_h}   (\nu(\x) \times \curl \G(\x,\x_t))^* \G(\x,\x_s) - \G(\x,\x_t)^* \nu(\x) \times \curl \G(\x,\x_s) \d s(\x) \nonumber \\
&= -i4\pi^2\sum_{j\in\Z^2}\left( \left( \h_j^+(\x_t)  - 2\Re(\beta_j)\ \g^+_j(\x_t)  \right)^*\g^+_j(\x_s) + \g^+_j(\x_t)^* \h_j^+(\x_s)  \right. \nonumber \\
&\phantom{2\pi^2\sum_{j\in\Z^2}\quad\ }- \left. \left( \h_j^-(\x_t)  + 2\Re(\beta_j)\ \g^-_j(\x_t)  \right)^*\g^-_j(\x_s) - \g^-_j(\x_t)^* \h_j^-(\x_s) \right).
\end{align}
Now recall that the  scattered field $\u(\x,l)$ satisfies
\begin{equation*}
\u(\x,l) = (k^2 + \nabla_\x\div_\x)\int_D \Phi(\x,\y) (\varepsilon(\y) - I_3)\E(\y,l)\,\d\y.
\end{equation*}
Thus by (\ref{ray_coeff}) we compute
\begin{align*}
\u_j^{\pm}(l) &= \frac{1}{4\pi^2} \int_{\Gamma_{\pm r}} \u(\x,l)e^{-i(\alpha_{1,j}x_1 + \alpha_{2,j}x_2 \pm \beta_j(x_3\mp h))}\d s(\x) \\
&=  \frac{1}{4\pi^2} \int_{\Gamma_{\pm r}}(k^2 + \nabla_\x\div_\x)\int_D \Phi(\x,\y) (\varepsilon(\y) - I_3)\E(\y,l)\,\d\y\ e^{-i(\alpha_{1,j}x_1 + \alpha_{2,j}x_2 \pm \beta_j(x_3\mp h))}\d s(\x) \\
&= \frac{1}{4\pi^2} \int_{\Gamma_{\pm r}} \int_D (k^2 + \nabla_\x\div_\x) \left( \Phi(\x,\y) (\varepsilon(\y) - I_3)\E(\y,l) \right)   \d\y\ e^{-i(\alpha_{1,j}x_1 + \alpha_{2,j}x_2 \pm \beta_j(x_3\mp h))}\d s(\x).
\end{align*}
Note that for any vector field $\v(\y)$,
\begin{align*}
&(k^2 + \nabla_{\x} \div_{\x})\left[ \Phi(\x,\y)\v(\y) \right] \\
&= k^2\Phi(\x,\y)\v(\y) + \begin{bmatrix} \frac{\partial^2}{\partial x_1^2}\Phi(\x,\y)v_1(\y) + \frac{\partial^2}{\partial x_1\partial x_2}\Phi(\x,\y)v_2(\y) + \frac{\partial^2}{\partial x_1 \partial x_3}\Phi(\x,\y)v_3(\y) \\  \frac{\partial^2}{\partial x_2\partial x_1}\Phi(\x,\y)v_1(\y) + \frac{\partial^2}{\partial x_2^2}\Phi(\x,\y)v_2(\y) + \frac{\partial^2}{\partial x_2 \partial x_3}\Phi(\x,\y)v_3(\y) \\ \frac{\partial^2}{\partial x_3\partial x_1}\Phi(\x,\y)v_1(\y) + \frac{\partial^2}{\partial x_3\partial x_2}\Phi(\x,\y)v_2(\y) + \frac{\partial^2}{\partial x_3^2}\Phi(\x,\y)v_3(\y) \end{bmatrix} \\
&=  k^2\Phi(\x,\y)\v(\y) + \nabla_{\x} \div_{\x} (\Phi(\x,\y)I_3) \v(\y) \\
&= k^2\G(\x,\y)\v(\y).
\end{align*}
Thus, along with Fubini's theorem, we have
\begin{align}
\u_j^{\pm}(l)&= \frac{k^2}{4\pi^2} \int_{\Gamma_{\pm r}} \int_D \G(\x,\y) (\varepsilon(\y) - I_3)\E(\y,l)\,\d\y \, e^{-i(\alpha_{1,j}x_1 + \alpha_{2,j}x_2 \pm \beta_j(x_3\mp h))}\d s(\x) \nonumber \\
&= k^2 \int_D \frac{1}{4\pi^2} \int_{\Gamma_{\pm r}}\G(\x,\y)e^{-i(\alpha_{1,j}x_1 + \alpha_{2,j}x_2 \pm \beta_j(x_3\mp h))} \d s(\x)\, (\varepsilon(\y) - I_3)\E(\y,l)\,\d\y  \nonumber \\
&= k^2\int_{D}\g_j^{\pm}(\y)  (\varepsilon(\y) - I_3)\E(\y,l)\, \d\y,
\label{coeff_u}
\end{align}
and so
\begin{align}
\begin{bmatrix} \alpha_{1,j} \\ \alpha_{2,j} \\ \pm \beta_j \end{bmatrix}u^{\pm}_{3,j}(l) &= k^2 \begin{bmatrix} \alpha_{1,j} \\ \alpha_{2,j} \\ \pm \beta_j \end{bmatrix} \int_D  \begin{bmatrix} (g^{\pm}_{31})_j & (g^{\pm}_{32})_j & (g^{\pm}_{33})_j\end{bmatrix}(\y) (\varepsilon(\y) - I_3)\E(\y,l)\, \d\y \nonumber \\
&= k^2 \int_D \begin{bmatrix} \alpha_{1,j}(g^\pm_{31})_j &  \alpha_{1,j}(g^\pm_{32})_j & \alpha_{1,j}(g^\pm_{33})_j  \\ \alpha_{2,j}(g^\pm_{31})_j &  \alpha_{2,j}(g^\pm_{32})_j & \alpha_{2,j}(g^\pm_{33})_j \\ \pm\beta_j (g^\pm_{31})_j &  \pm\beta_j(g^\pm_{32})_j & \pm\beta_j(g^\pm_{33})_j \end{bmatrix} (\varepsilon(\y) - I_3)\E(\y,l)\, \d\y \nonumber \\
&=   k^2 \int_{D}\h_j^{\pm}(\y)  (\varepsilon(\y) - I_3)\E(\y,l)\, \d\y.
\label{coeff_u3}
\end{align}
Now recall that 
\begin{multline*}
\I(\z) = \sum_{l = 1}^N\left| \sum_{j\in\Z^2} \left( \left( \h_j^+(\z)  - 2\Re(\beta_j)\ \g^+_j(\z)\right)^*\u_j^+(l)  + u^+_{3,j}(l)\g^+_j(\z)^*\begin{bmatrix} \alpha_{1,j} \\ \alpha_{2,j} \\ \beta_j \end{bmatrix} \right. \right. \\
\left. \left. - \left( \h_j^-(\z)  + 2\Re(\beta_j)\ \g^-_j(\z)\right)^*\u_j^-(l)  - u^-_{3,j}(l) \g^-_j(\z)^* \begin{bmatrix} \alpha_{1,j} \\ \alpha_{2,j} \\ -\beta_j \end{bmatrix} \right) \right|^p.
\end{multline*}
Plugging the formula of $\u_j^{\pm}(l)$ and  $[\alpha_{1,j} \ \alpha_{2,j} \ \pm \beta_j ]^\top u^{\pm}_{3,j}(l)$ in~\eqref{coeff_u} and \eqref{coeff_u3}  into $\mathcal{I}(\z)$, and using identities~\eqref{HK} and \eqref{MHK} we obtain that 
$$
\I(\z) = \sum_{l = 1}^N \left|  \frac{k^2}{2\pi^2} \int_D\frac{1}{2i}\left( \G(\z,\y) - \G^*(\y,\z)\right)  (\varepsilon(\y) - I_3)\E(\y,l)\, \d\y \right|^p.
$$
Now letting
$$ 
F(\x,\y) := \frac{1}{2i} \left( \Phi(\x,\y) - \overline{\Phi(\y,\x)} \right),
$$
we calculate 
\begin{align*}
\F(\x,\y) &:= \frac{1}{2i}(\G(\x,\y) - \G^*(\y,\x)) \\
&= F(\x,\y)I_3 + \frac{1}{2ik^2}\left( \nabla \div(\Phi(\x,\y)I_3) -  \left(\nabla \div(\Phi(\y,\x)I_3)\right)^*\right) \\
&= F(\x,\y)I_3 + \frac{1}{2ik^2}\begin{bmatrix} \frac{\partial^2}{\partial x_i \partial x_j}\Phi(\x,\y) -  \overline{\frac{\partial^2}{\partial x_j \partial x_i}\Phi(\y,\x)} \end{bmatrix}_{ij} \\
&= F(\x,\y)I_3 + \frac{1}{k^2}\begin{bmatrix} \frac{\partial^2}{\partial x_i \partial x_j}F(\x,\y) \end{bmatrix}_{ij}\\
&= F(\x,\y)I_3 + \frac{1}{k^2}\nabla \div(F(\x,\y)I_3).
\end{align*}
The proof follows from using the following expression of the scalar $\alpha$-quasiperiodic Green's function $\Phi(\x,\y)$
$$
\Phi(\x,\y) = \sum_{j\in \Z^2}e^{-i2\pi \alpha\cdot j}\frac{e^{ik\sqrt{(x_1-y_1+2j_1\pi)^2 + (x_2-y_2+2j_2\pi)^2 + (x_3-y_3)^2}}}{4\pi \sqrt{(x_1-y_1+2j_1\pi)^2 + (x_2-y_2+2j_2\pi)^2 + (x_3-y_3)^2}},
$$
for $\x,\y \in \Omega_h$ such that $x_3 \neq y_3$. 
\end{proof}

%

Denote by  $|\cdot|_F$ and $|\cdot|_2$  the Frobenius norm of a matrix and the $2$-norm of a vector respectively.
In the next theorem we analyze the stability of the imaging functional.

\begin{theorem}
(Stability) For $\delta > 0$, let $\u^{\delta}$ be the noisy data such that
$$
\sum_{l=1}^N  \left\| \u^{\delta}(\cdot,l) - \u(\cdot,l) \right\|_{L^2(\Gamma_\rho \cup \Gamma_{-\rho})} \leq \delta,
$$
and let $\mathcal{I}_{\delta}$ be the imaging functional computed from this data. Then, for all $\z \in \Omega_h$,
$$
|\mathcal{I}_{\delta}(\z) - \mathcal{I}(\z)| = O(\delta).
$$
\end{theorem}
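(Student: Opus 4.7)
The plan is to reduce the stability estimate to a Lipschitz-type bound on the inner vector-valued series, and then pass through $|\cdot|^p$. Denote by $S_l(\z)\in\C^3$ the expression inside $|\cdot|^p$ in $\mathcal{I}(\z)$, and by $S_l^\delta(\z)$ its analogue computed from $\u^\delta$. I will invoke the elementary inequality
$$
\bigl||a|^p-|b|^p\bigr|\leq p\,(|a|+|b|)^{p-1}|a-b|,\qquad a,b\in\C^3,\ p\geq 1,
$$
(with the standard modification $||a|^p-|b|^p|\leq|a-b|^p$ for $0<p\leq 1$), together with the local boundedness of $|S_l(\z)|$, which is continuous in $\z$ by Theorem~\ref{resolution}. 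This reduces the task to proving $|S_l^\delta(\z)-S_l(\z)|=O(\delta)$ locally uniformly on $\Omega_h$.

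For the latter I exploit that $S_l$ depends linearly on the Rayleigh coefficients $\u_j^\pm(l)$, which in turn are bounded linear functionals of $\u(\cdot,l)$ on $\Gamma_{\pm\rho}$ via~(\ref{ray_coeff}). Substituting~(\ref{ray_coeff}) into the defining series of $\mathcal{I}(\z)$ and interchanging the sum over $j\in\Z^2$ with the surface integrals, which is permissible by the rapid decay in $|j|$ of $\g_j^\pm(\z)$ and $\h_j^\pm(\z)$ noted in Remark~\ref{remark1}, gives a representation
$$
S_l(\z)=\int_{\Gamma_{+\rho}}\Psi^+(\x,\z)\,\u(\x,l)\,\d s(\x)+\int_{\Gamma_{-\rho}}\Psi^-(\x,\z)\,\u(\x,l)\,\d s(\x),
$$
with matrix kernels $\Psi^\pm(\cdot,\z)$ assembled from the modes $e^{-i(\alpha_{1,j}x_1+\alpha_{2,j}x_2\pm\beta_j(x_3\mp h))}$ weighted by $\h_j^\pm(\z)$, $\g_j^\pm(\z)$, and $(\alpha_{1,j},\alpha_{2,j},\pm\beta_j)^\top$. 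Cauchy--Schwarz then delivers
$$
|S_l^\delta(\z)-S_l(\z)|\leq\sum_{\pm}\|\Psi^\pm(\cdot,\z)\|_{L^2(\Gamma_{\pm\rho})}\,\|\u^\delta(\cdot,l)-\u(\cdot,l)\|_{L^2(\Gamma_{\pm\rho})},
$$
which is $O(\delta)$ by hypothesis, provided the kernels are in $L^2(\Gamma_{\pm\rho})$. Summing over $l=1,\dots,N$ and inserting into the first step completes the proof.

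The main obstacle I expect is confirming $\Psi^\pm(\cdot,\z)\in L^2(\Gamma_{\pm\rho})$. Using the orthogonality of the Fourier modes on the periodic cell $(-\pi,\pi)^2$, the squared $L^2$-norm of $\Psi^\pm(\cdot,\z)$ reduces to a weighted $\ell^2$-series in $j$; the finitely many propagating modes ($\beta_j$ real) contribute manifestly finite terms, while for evanescent $j$ the noise-amplification factor $e^{|\beta_j|(\rho-h)}$ coming from the growth of $e^{\mp i\beta_j(x_3\mp h)}$ on $\Gamma_{\pm\rho}$ is compensated by the decay $e^{-|\beta_j|(h\mp z_3)}$ inherent to $r_j^\pm(\z)$, and hence to $\g_j^\pm(\z)$ and $\h_j^\pm(\z)$, provided $\z$ remains strictly interior to $\Omega_h$. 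Closer to the computational practice emphasized in Remark~\ref{remark1}, one may alternatively truncate $\mathcal{I}$ and $\mathcal{I}_\delta$ to the finite set of propagating modes, in which case the $L^2$-norm of each $\Psi^\pm(\cdot,\z)$ is trivially finite and the stability bound follows at once from a finite-sum estimate.
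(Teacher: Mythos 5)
Your proposal is correct in substance and arrives at the same conclusion, but both halves of the argument run along genuinely different lines from the paper's proof. For the reduction from $|\cdot|^p$ to the linear quantity, you invoke the mean-value inequality $\bigl||a|^p-|b|^p\bigr|\le p(|a|+|b|)^{p-1}|a-b|$ together with local boundedness of $|S_l(\z)|$; the paper instead factors $\I_{\delta}(\z)-\I(\z)$ through the telescoping identity for $a^p-b^p$ applied to $a=\I_{\delta}(\z)^{1/p}$, $b=\I(\z)^{1/p}$ and then controls the resulting powers with $(a+b)^m\le 2^m(a^m+b^m)$ --- a route that implicitly requires $p$ to be a positive integer, whereas your inequality covers all real $p\ge 1$. (Neither argument yields $O(\delta)$ for $0<p<1$; your fallback $\bigl||a|^p-|b|^p\bigr|\le|a-b|^p$ would only give $O(\delta^p)$ there, so the theorem should really be read with $p\ge1$.) For the core linear estimate, you dualize: substituting (\ref{ray_coeff}), swapping sum and integral, and applying Cauchy--Schwarz against an explicit kernel $\Psi^{\pm}(\cdot,\z)\in L^2(\Gamma_{\pm\rho})$ whose norm you evaluate by Parseval, i.e.\ an $\ell^2$--$\ell^2$ pairing in $j$. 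The paper works coefficientwise: it bounds each difference $|\u^{\pm}_{\delta,j}(l)-\u^{\pm}_j(l)|_2$ by the $L^2$ norm of the data error uniformly in $j$ and then sums the weights $|\h_j^{\pm}(\z)|_F$, $|\g_j^{\pm}(\z)|_F$ in $\ell^1$ over $j$, producing the constant $C_1$. The two pairings are equivalent given the rapid decay of $\g_j^{\pm}(\z)$ and $\h_j^{\pm}(\z)$, so nothing essential is lost either way. What your version buys is a more honest accounting of the evanescent modes: you explicitly track the amplification factor $e^{\sqrt{|\alpha_j|^2-k^2}\,(\rho-h)}$ incurred on $\Gamma_{\pm\rho}$ and observe that it must be absorbed by the decay $e^{-\sqrt{|\alpha_j|^2-k^2}\,(h\mp z_3)}$ of $r_j^{\pm}(\z)$; note that this absorption requires $|z_3|<2h-\rho$, so ``strictly interior to $\Omega_h$'' suffices only when $\rho=h$. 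The paper's proof silently treats the exponential factor in (\ref{ray_coeff}) as having modulus one and takes $\sup_{\z\in\Omega_h}$ inside $C_1$, which at face value diverges as $z_3\to\pm h$ since $|\h_j^{\pm}(\z)|_F$ then decays only polynomially in $j$; your locally uniform formulation (or the truncation to propagating modes you suggest) is the cleaner way to justify the stated $O(\delta)$ bound.
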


{\color{black}
\begin{proof}
Note that, for all $j \in \Z^2$ and $l=1,\dots,N$,
$$
\left| \u_{\delta,j}^{\pm}(l) - \u_j^{\pm}(l) \right|_2 \leq \frac{1}{4\pi^2}\int_{\Gamma_{\pm \rho}}\left| \u^{\delta}(\x,l) - \u(\x,l) \right|\,\d s(\x),
$$
thus, by Cauchy-Schwarz inequality,
$$
\sum_{l=1}^N \left| \u_{\delta,j}^{\pm}(l) - \u_j^{\pm}(l) \right|_2 \leq \sum_{l=1}^N  \left\| \u^{\delta}(\cdot,l) - \u(\cdot,l) \right\|_{L^2(\Gamma_{\pm\rho} )} \leq \delta. 
$$
We first prove the theorem when $p = 1$. Using Cauchy-Schwarz inequality and triangle inequality, we estimate
\begin{align*}
\left| \mathcal{I}_{\delta}(\z) - \mathcal{I}(\z) \right| &\leq \sum_{l=1}^N \sum_{j \in \Z^2} \left| \u_{\delta,j}^+(l) - \u_j^+(l) \right|_2 \left( \left| \h_j^+(\z) \right|_F + 2\Re(\beta_j) \left| \g_j^+(\z) \right|_F + \left| \g_j^+(\z)^* \begin{bmatrix} \alpha_{1,j} \\ \alpha_{2,j} \\ \beta_j \end{bmatrix}  \right|_2 \right) \\
&+ \sum_{l=1}^N \sum_{j \in \Z^2} \left| \u_{\delta,j}^-(l) - \u_j^-(l) \right|_2 \left( \left| \h_j^-(\z) \right|_F + 2\Re(\beta_j) \left| \g_j^-(\z) \right|_F + \left|  \g_j^-(\z)^* \begin{bmatrix} \alpha_{1,j} \\ \alpha_{2,j} \\ -\beta_j \end{bmatrix} \right|_2 \right) \\
&\leq  \sum_{j \in \Z^2} \left( \left( \left| \h_j^+(\z) \right|_F + (2\Re(\beta_j) + k) \left| \g_j^+(\z) \right|_F  \right) \sum_{l=1}^N \left| \u_{\delta,j}^+(l) - \u_j^+(l) \right|_2 \right) \\
&+  \sum_{j \in \Z^2} \left( \left( \left| \h_j^-(\z) \right|_F + (2\Re(\beta_j) + k) \left| \g_j^-(\z) \right|_F  \right)  \sum_{l=1}^N \left| \u_{\delta,j}^-(l) - \u_j^-(l) \right|_2 \right)\\
&\leq C_1 \delta,
\end{align*}
where 
$$
C_1:= \sum_{j \in \Z^2} \sup_{\z \in \Omega_h} \left( \left| \h_j^+(\z) \right|_F + \left| \h_j^-(\z) \right|_F + (2\Re(\beta_j) + k) \left( \left| \g_j^+(\z) \right|_F +  \left| \g_j^-(\z) \right|_F \right)  \right).
$$
Note that $C_1 < \infty$ since the sequences $\left(\sup_{\z \in \Omega_h} \left| \h_j^{\pm}(\z) \right|_F\right)_j$ and $\left(\sup_{\z \in \Omega_h} \left| \g_j^{\pm}(\z) \right|_F\right)_j$  quickly decay as mentioned before.

For $p \geq 2$, using triangle inequality, we estimate
\begin{align*}
\left| \mathcal{I}_{\delta}(\z) - \mathcal{I}(\z) \right| &= \left| \left( \mathcal{I}_{\delta}(\z)^{1/p} - \mathcal{I}(\z)^{1/p} \right) \sum_{m=1}^{p} \mathcal{I}_{\delta}(\z)^{m/p} \mathcal{I}(\z)^{(p-1-m)/p}\right| \\
&\leq \left| \mathcal{I}_{\delta}(\z)^{1/p} - \mathcal{I}(\z)^{1/p} \right| \sum_{m=1}^{p} \mathcal{I}_{\delta}(\z)^{m/p} \mathcal{I}(\z)^{(p-1-m)/p} \\
&\leq  C_1 \delta \sum_{m=1}^{p-1} \left( \left| \mathcal{I}(\z)^{1/p} - \mathcal{I}_{\delta}(\z)^{1/p} \right| + \mathcal{I}(\z)^{1/p} \right)^m \mathcal{I}(\z)^{(p-1-m)/p}.
\end{align*}
Note that, for two nonnegative numbers $a$ and $b$,
$$
(a + b)^m \leq (2\max\{a,b\})^m = 2^m \max\{a^m,b^m\} \leq 2^m(a^m + b^m),\quad \text{for all } m=1,\dots, p,
$$
hence,
\begin{align*}
\left| \mathcal{I}_{\delta}(\z) - \mathcal{I}(\z) \right| &\leq   C_1 \delta \sum_{m=1}^{p-1} 2^m \left( \left| \mathcal{I}(\z)^{1/p} - \mathcal{I}_{\delta}(\z)^{1/p} \right|^m + \mathcal{I}(\z)^{m/p} \right) \mathcal{I}(\z)^{(p-1-m)/p} \\
&\leq C_1 \delta \sum_{m=1}^{p-1} 2^m \left( C_1^m \delta^m + \mathcal{I}(\z)^{m/p} \right) \mathcal{I}(\z)^{(p-1-m)/p} \\
&= O(\delta),
\end{align*}
which completes the proof.
\end{proof}
}

\section{Numerical study}
\label{numerical}
We   tested the performance of the new imaging functional for data with and without evanescent modes, for  different levels of noise in the data, for different number of incident sources, and we also compare with the orthogonality sampling method for different types of periodic media.
We used $k = 2\pi$ and $\alpha_1 = \alpha_2 = 0$ in the numerical simulation. The exponential of the imaging functional is $p=3$. Choosing $h=1$, the domain $\Omega_h = (-\pi,\pi)^2 \times (-1,1)$ where the medium is sought is partitioned into a $40 \times 40$ grid. To simplify the calculation we choose $\mathbf{J}(\cdot,l)  = \sum_{j \in \Z^2} (0,0,1)^\top \delta_{\y_l + 2\pi j}$  that means the incident fields are the third column of the Green's tensor, more specifically, they are emitted from point sources and have the form
\begin{equation}
\label{inc}
\E^{inc} (\x,l) = -\frac{i}{8\pi^2k^2}\sum_{j \in \Z^2} \begin{bmatrix} \text{sgn}(y^l_3 - x_3) \alpha_{1,j} \\ \text{sgn}(y^l_3 - x_3) \alpha_{2,j} \\ k^2/\beta_j + \beta_j  \end{bmatrix} e^{i(\alpha_{1,j}(y^l_1 - x_1) + \alpha_{2,j}(y^l_2 - x_2) + \beta_j |y^l_3 - x_3|)}, \quad \x \in \Omega_h
\end{equation}
where $\y_l = (y_1^l,y_2^l,y_3^l)$ are the sources' locations and $\text{sgn}$ is the sign function. The sources are placed evenly on two planes $(-\pi,\pi)^2 \times \{\pm 2.5\}$. Except the test with different number of incident sources in section~\ref{source},
we use $450$ incident sources to illuminate the periodic medium, with $225$ sources on $(-\pi,\pi)^2 \times \{2.5\}$ and $225$ sources on $(-\pi,\pi)^2 \times \{-2.5\}$.

To generate data for the inverse problem, we solved the volume integro-differential equation~\eqref{ls} using a spectral Galerkin method studied in~\cite{Nguye2015}. Instead of first solving for the scattered fields on $\Gamma_{\pm\rho}$ then computing their Rayleigh coefficients using (\ref{ray_coeff}), we solved directly for their Rayleigh coefficients to save computational time. We tested and observed that this does not make a difference in the numerical results for the inverse problem. After getting the Rayleigh coefficients $\u^{\pm}_j(l)$, we added artificial noise with noise level $\delta > 0$. To be specific, let $\mathbf{U}$ be the matrix containing all $\u^{\pm}_j(l)$, we created a matrix $\mathbf{N}$ of the same size as $\mathbf{U}$. The entries of $\mathbf{N}$ are complex numbers whose real and imaginary parts are uniformly distributed random numbers on $[-1,1]$. The noisy version of $\mathbf{U}$ is then
$$
\mathbf{U}_\delta := \mathbf{U} + \delta |\mathbf{U}|_F \frac{\mathbf{N}}{|\mathbf{N}|_F}.
$$
For all examples except those in section \ref{sub:noise}, the noise level is $\delta = 20\%$. 
In all of the numerical examples the isovalue for the plotting of 3D reconstructions is chosen as $60\%$ of the maximal value of 
the imaging functional $\I(\z)$.
 We consider the following types of periodic media. 
\paragraph{Rings.} The first periodic medium consists of a short hollow cylinder which resembles a ring in each period. The inner circle has radius $1$ while the outer circle has radius $1.5$. The height of the cylinder is $0.2$. The permittivity $\varepsilon(\x)$ is given by
$$
\varepsilon(\x) = \left\{ \begin{array}{ll} \text{diag}(1.3,1.5,1.4) & \text{if}\ \x \in D, \\ I_3 & \text{if}\ \x \notin D. \end{array} \right.
$$

\paragraph{Spheres.} The second periodic medium that we considered consists of four aligned spheres in each period. The radius of each sphere is $0.4$, and the permittivity is similar to that of the ring case. 

\paragraph{Cubes.} The third periodic medium that we considered consists of one cube in each period. The size of each cube is $(-1,1)^2 \times (-0.3,0.3)$, and the permittivity is similar to that of the ring case.

\subsection{Reconstruction with  and without evanescent modes (Figure~\ref{evan})}
The imaging  functional is an infinite sum, however, there are only finitely many terms whose indices are such that $\beta_j$ is real-valued (or $k^2 > |\alpha_j|^2$). These terms are associated with propagating modes. The rest of the terms are associated with  evanescent modes and correspond to complex-valued $\beta_j$'s.  As discussed in Remark~\ref{remark1} we observed numerically that the terms associated with evanescent modes do not contribute much to the reconstruction, which means we can compute the imaging functional using only propagating modes and the results will still be the same. See Figure \ref{evan} for a comparison between two reconstructions for the same periodic medium, one with and one without evanescent modes.
\begin{figure}[h!]
\centering
\subfloat[]{\includegraphics[width=5.5cm]{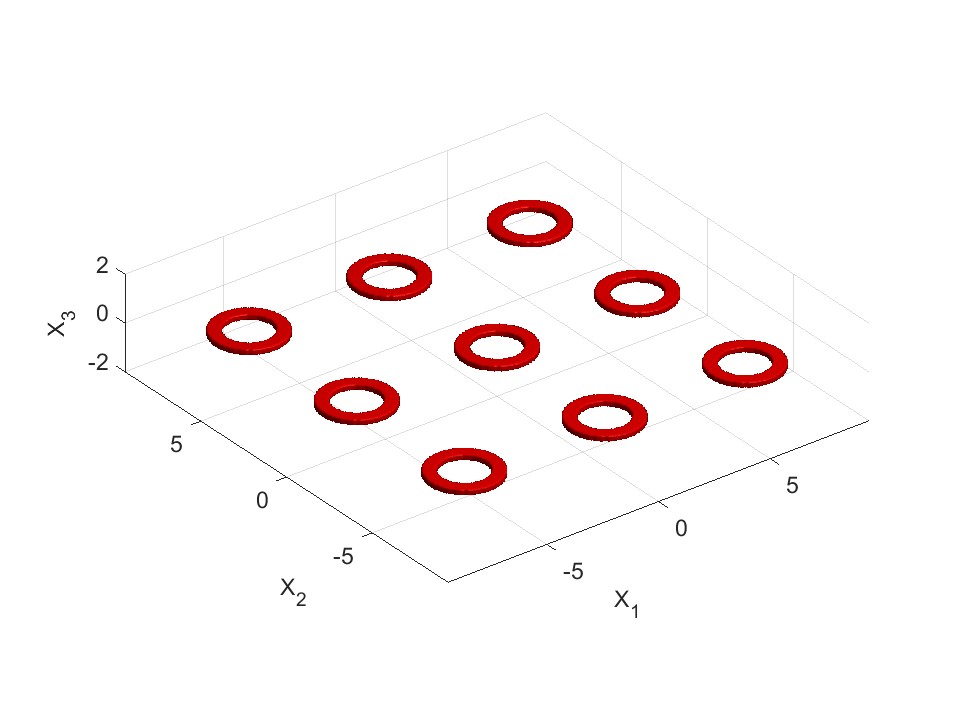}}\hspace{-0.4cm}
\subfloat[]{\includegraphics[width=5.5cm]{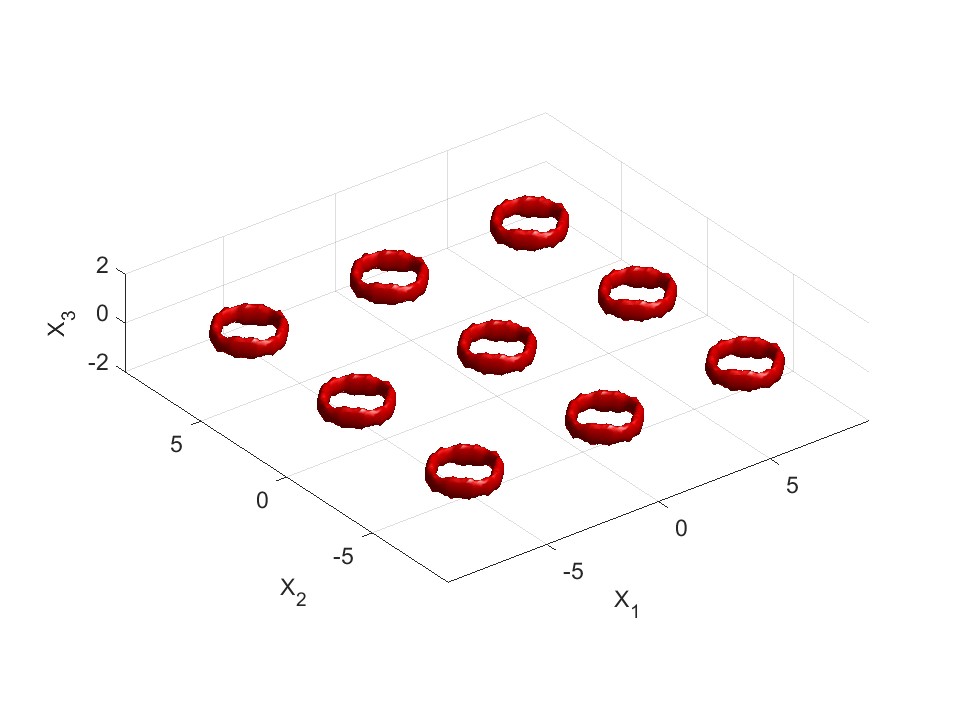}} \hspace{-0.4cm}
\subfloat[]{\includegraphics[width=5.5cm]{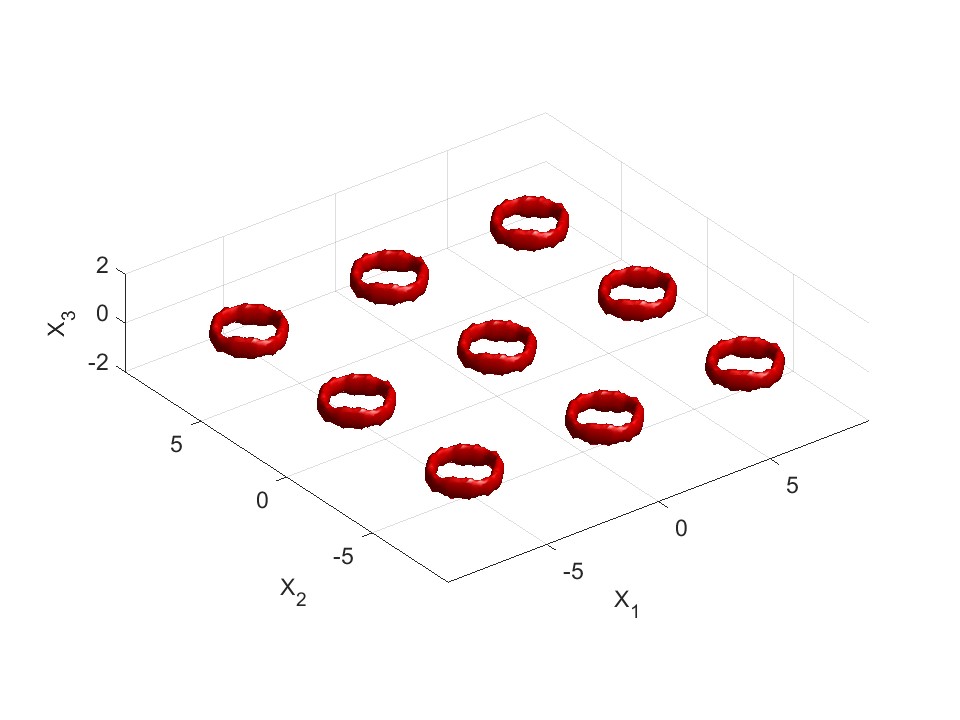}} \\
\subfloat[]{\includegraphics[width=5.5cm]{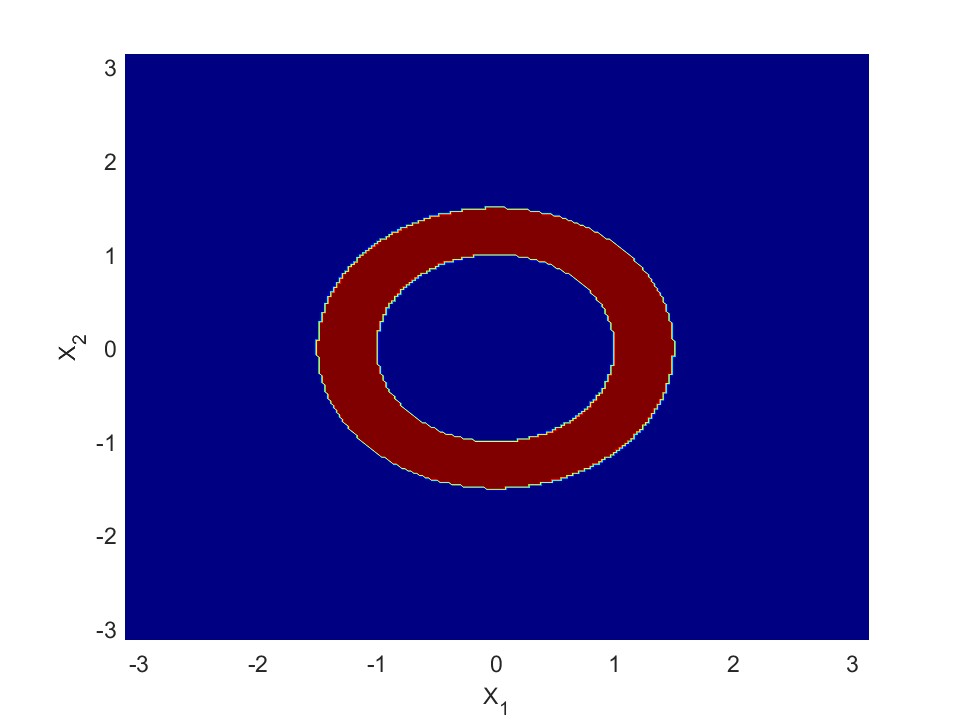}} \hspace{-0.3cm}
\subfloat[]{\includegraphics[width=5.5cm]{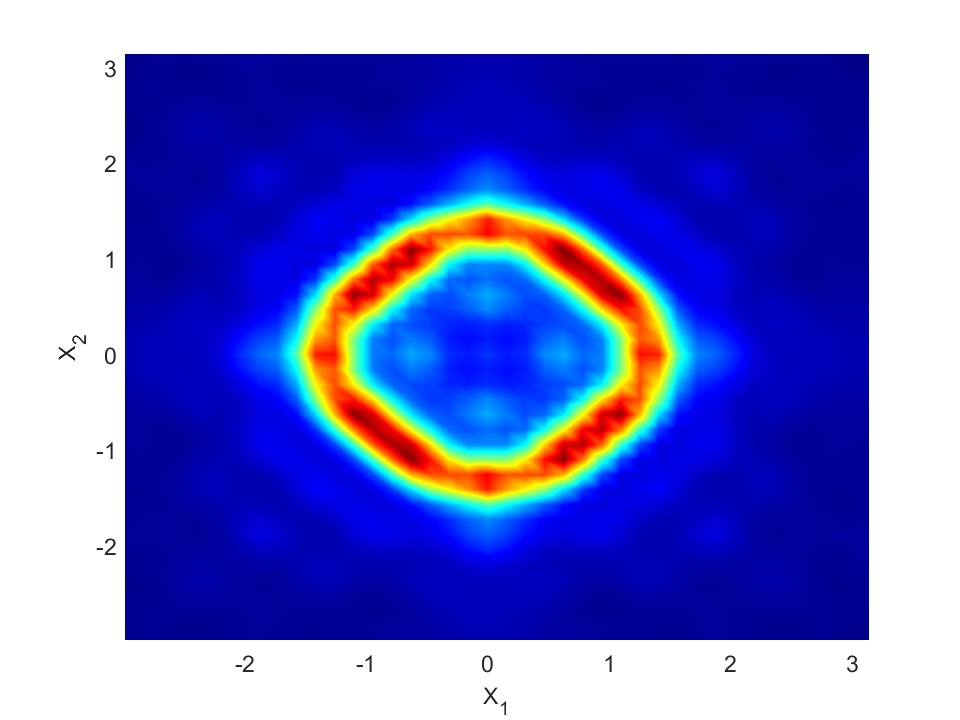}} \hspace{-0.3cm}
\subfloat[]{\includegraphics[width=5.5cm]{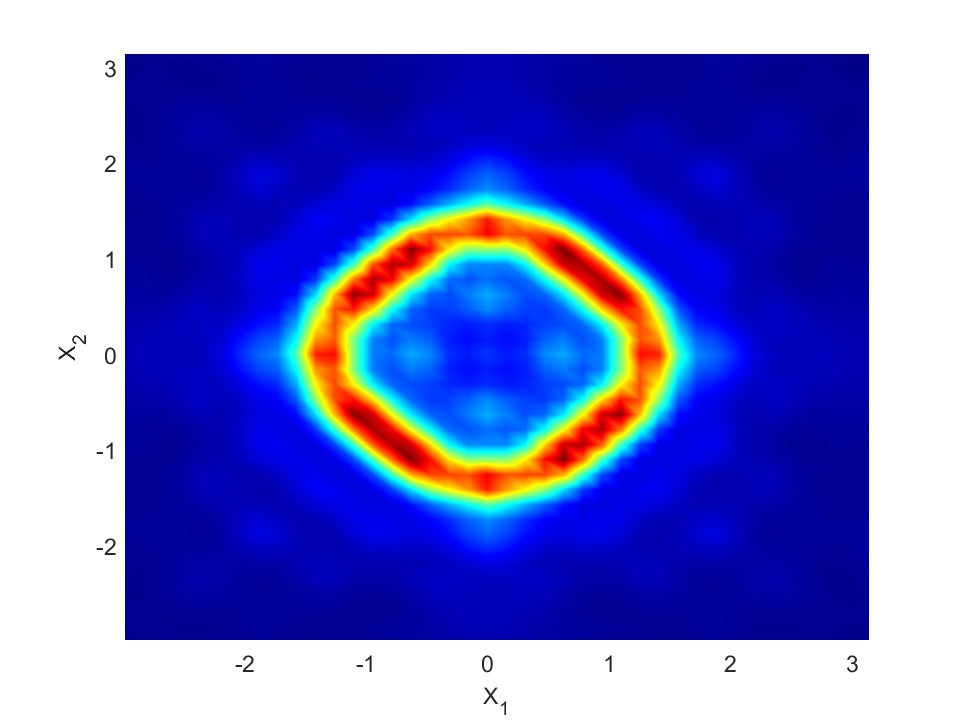}}\\
\subfloat[]{\includegraphics[width=5.5cm]{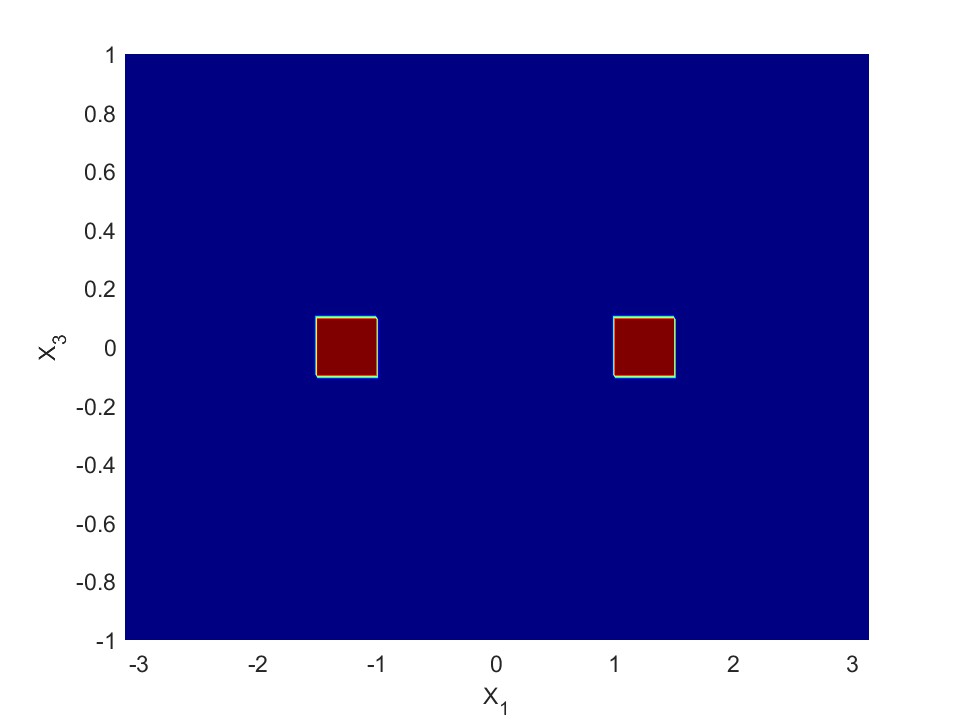}} \hspace{-0.3cm}
\subfloat[]{\includegraphics[width=5.5cm]{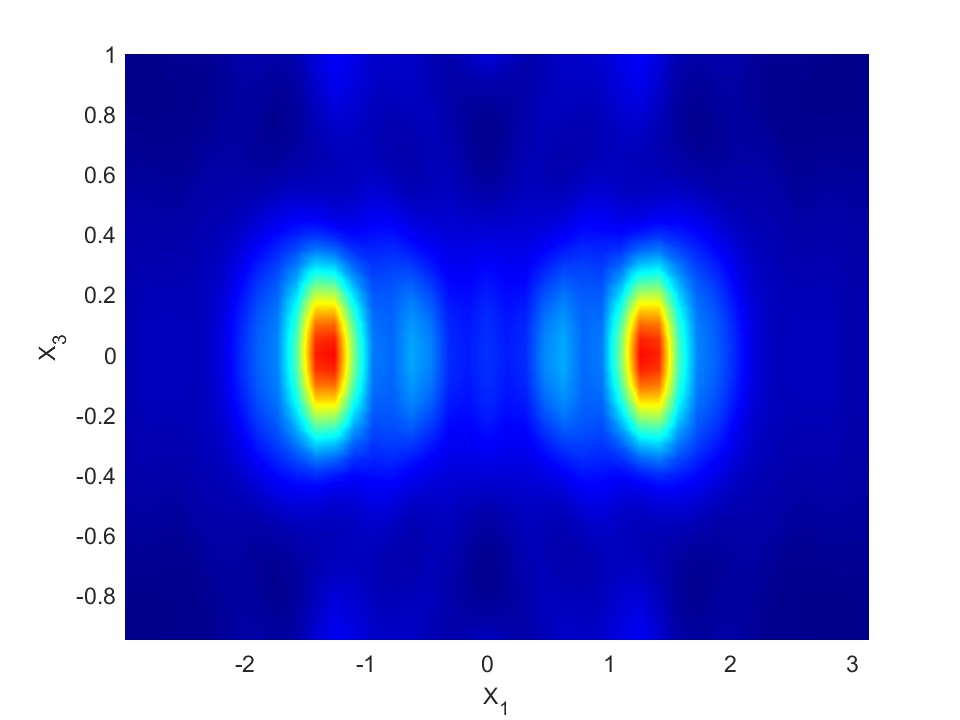}} \hspace{-0.3cm}
\subfloat[]{\includegraphics[width=5.5cm]{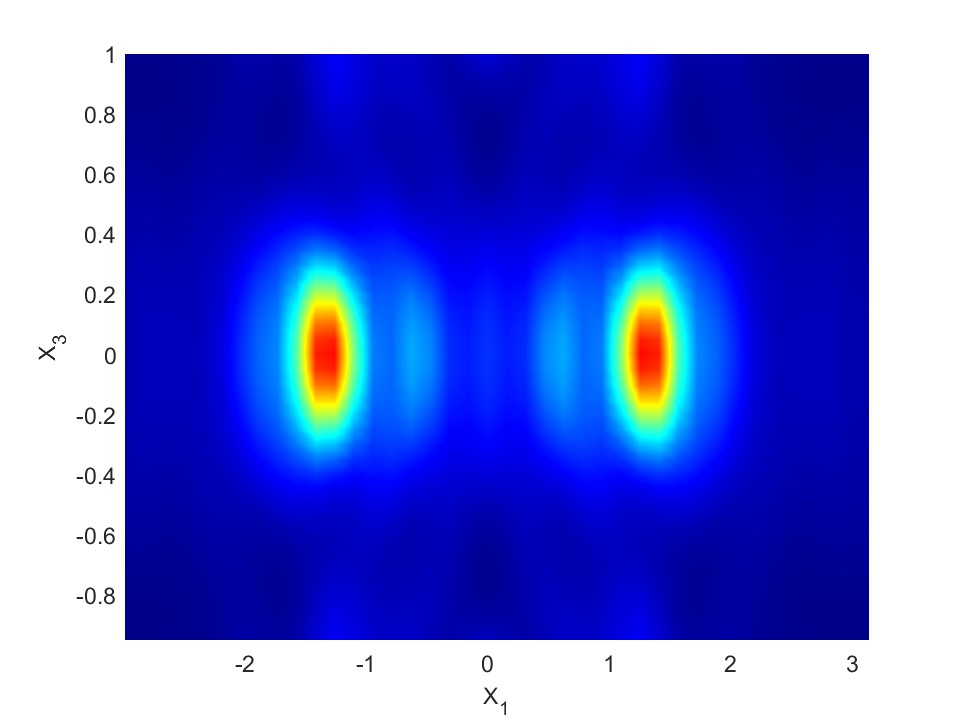}}
\caption{Reconstruction with  and without evanescent modes. First column (a, d, g): True geometry in 3D and 2D views. 
Second column (b, e, h): reconstruction with evanescent modes. Second column (c, f, i): reconstruction without evanescent modes}
\label{evan}
\end{figure}

\subsection{Reconstruction with highly noisy data (Figure~\ref{noise})}
 \label{sub:noise}
We already proved that the imaging  functional is stable against noise in the data, and numerical results in this section further justify that. We tested the performance of the method against different noise levels. In Figure \ref{noise}, we include the reconstructions of the ring at $\delta = 40\%$ and $\delta = 60\%$. Along with its reconstruction at $\delta = 20\%$ in Figure \ref{compare_ring}, we can see that the method gives very similar results. It is also known that the factorization method which was widely studied 
for imaging periodic media is not very stable against noise in the data (see, e.g., \cite{Arens2005}). Thus this test  numerically justifies the fact that the sampling method studied in this paper is more stable than  the factorization method.
\begin{figure}[h!!!]
\centering
\subfloat[]{\includegraphics[width=5.5cm]{ring_true_iso}}\hspace{-0.4cm}
\subfloat[]{\includegraphics[width=5.5cm]{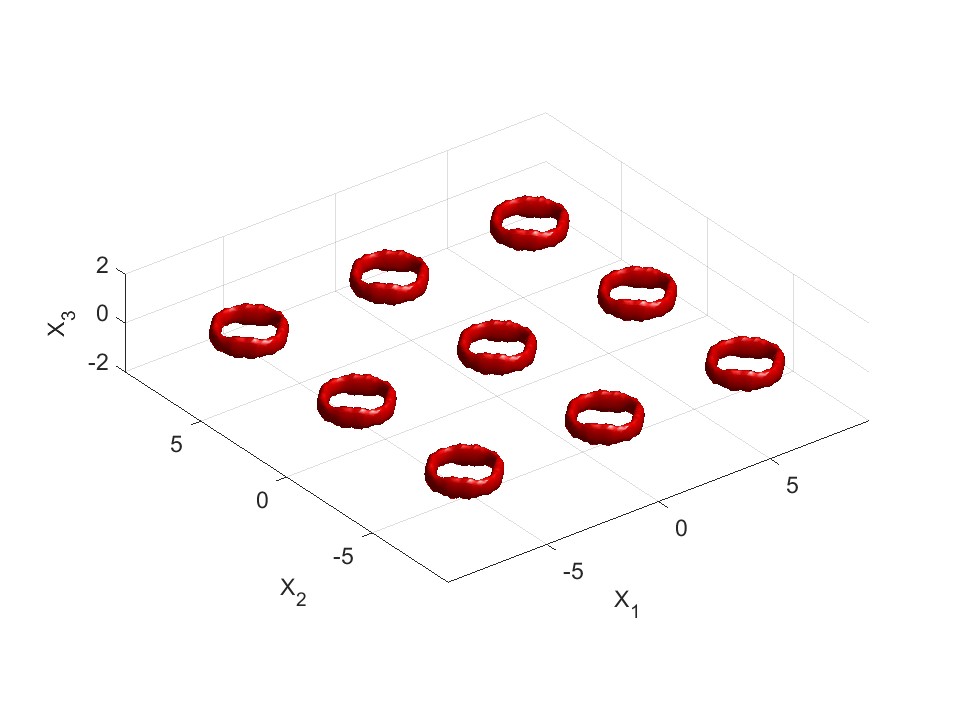}} \hspace{-0.4cm}
\subfloat[]{\includegraphics[width=5.5cm]{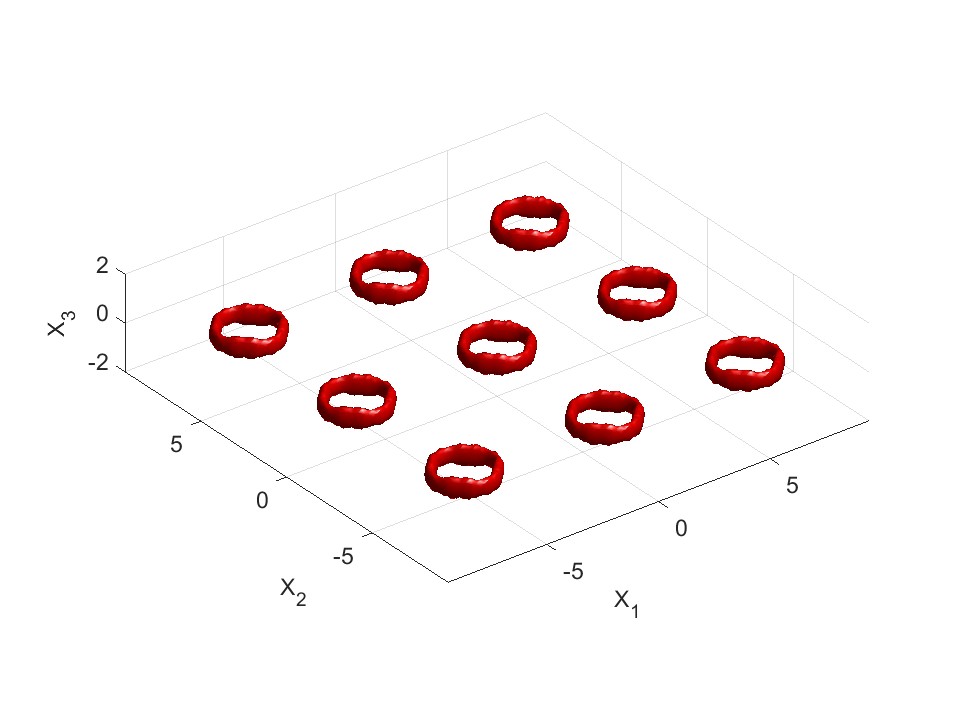}} \\
\subfloat[]{\includegraphics[width=5.5cm]{ring_true_xy}} \hspace{-0.3cm}
\subfloat[]{\includegraphics[width=5.5cm]{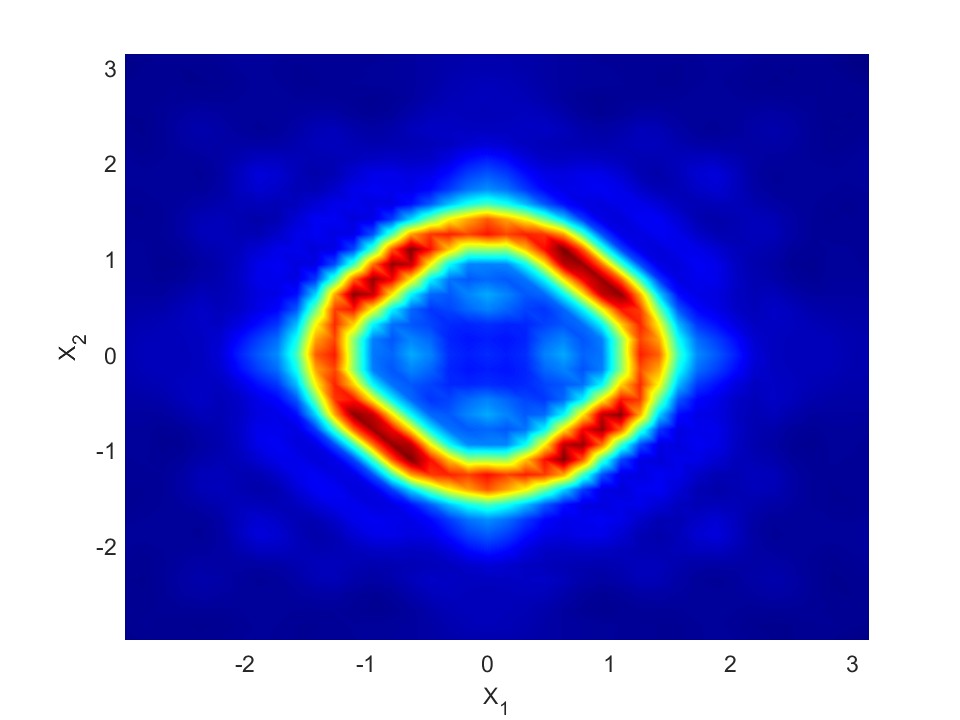}} \hspace{-0.3cm}
\subfloat[]{\includegraphics[width=5.5cm]{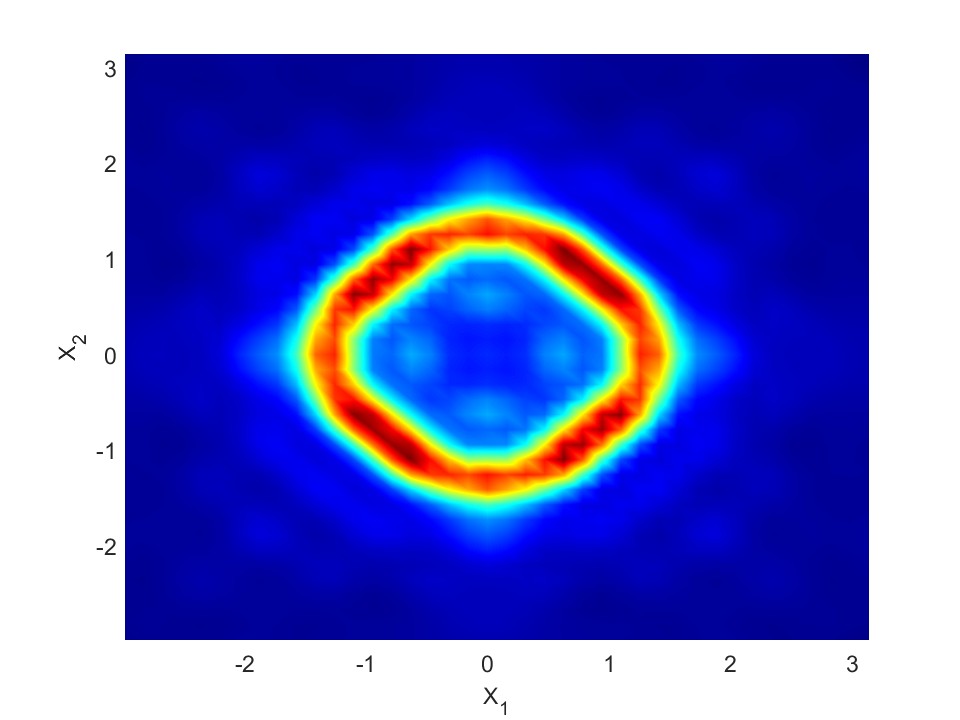}}\\
\subfloat[]{\includegraphics[width=5.5cm]{ring_true_xz}} \hspace{-0.3cm}
\subfloat[]{\includegraphics[width=5.5cm]{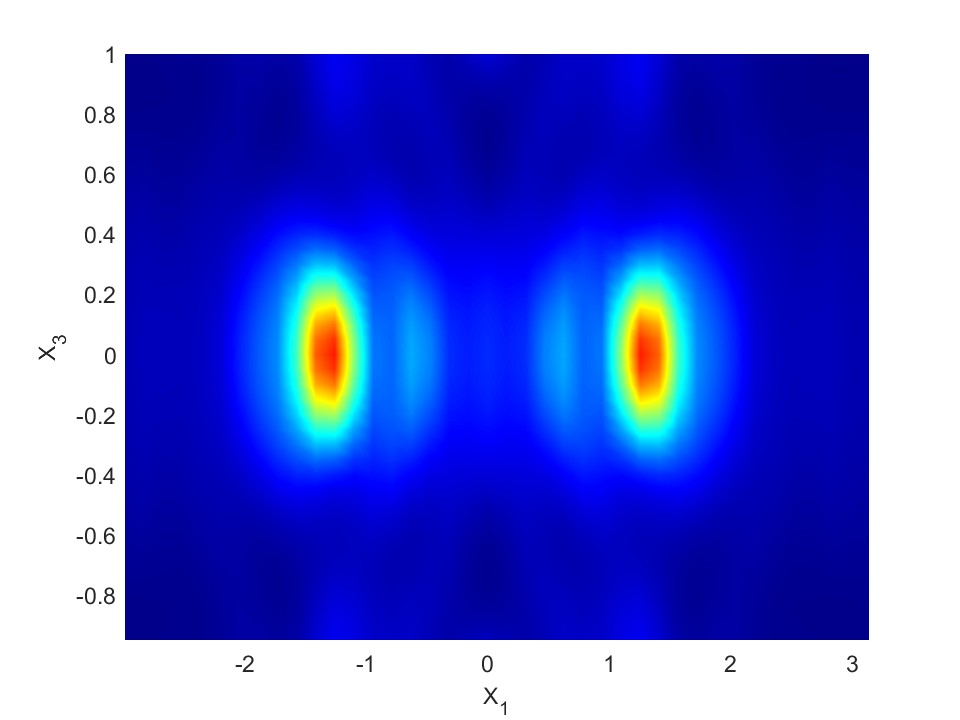}} \hspace{-0.3cm}
\subfloat[]{\includegraphics[width=5.5cm]{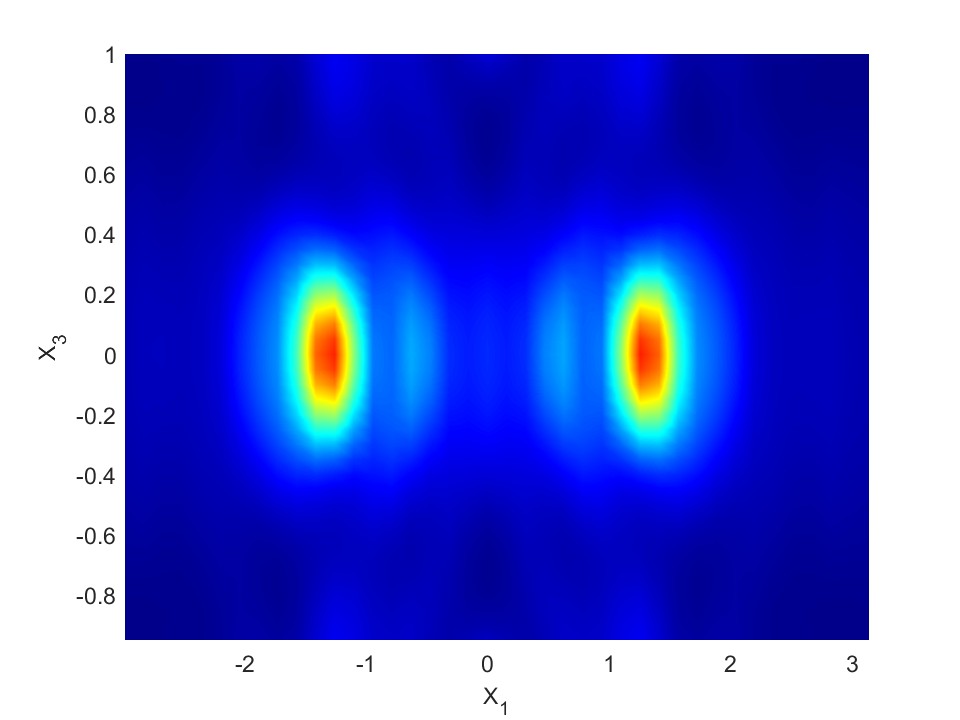}}
\caption{Reconstruction with highly noisy data. First column (a, d, g): True geometry in 3D and 2D views. 
Second column (b, e, h): reconstruction with $40\%$ noise in the data. Second column (c, f, i): reconstruction with $60\%$ noise in the data.}
\label{noise}
\end{figure}

\subsection{Reconstruction with different numbers of incident sources (Figure~\ref{incident})}
\label{source}
Generally, the more incident sources the better the reconstruction. However, when the number of incident sources has reached a certain amount, the reconstruction will not change even if we increase this number. Figure \ref{incident} shows the reconstructions  with $200$ and $800$ incident sources. We can observe that the reconstruction with $800$ incident sources is very similar to that with $450$ incident sources in Figure \ref{compare_ring}, and they are both better than the reconstruction with just $200$ incident sources.

\begin{figure}[h!]
\centering
\subfloat[]{\includegraphics[width=5.5cm]{ring_true_iso}}\hspace{-0.4cm}
\subfloat[]{\includegraphics[width=5.5cm]{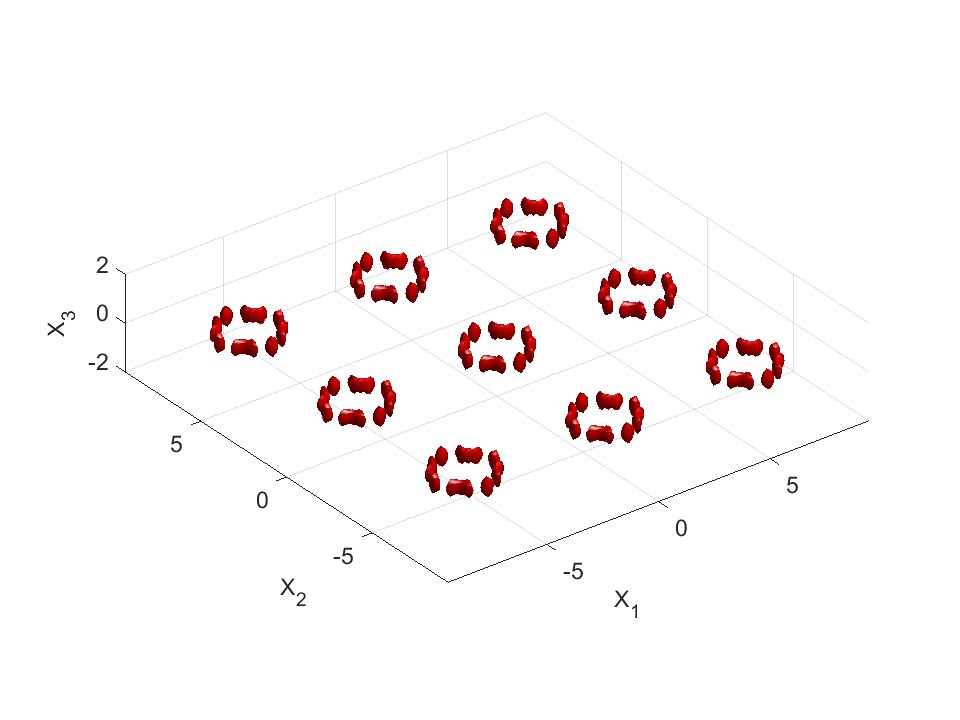}} \hspace{-0.4cm}
\subfloat[]{\includegraphics[width=5.5cm]{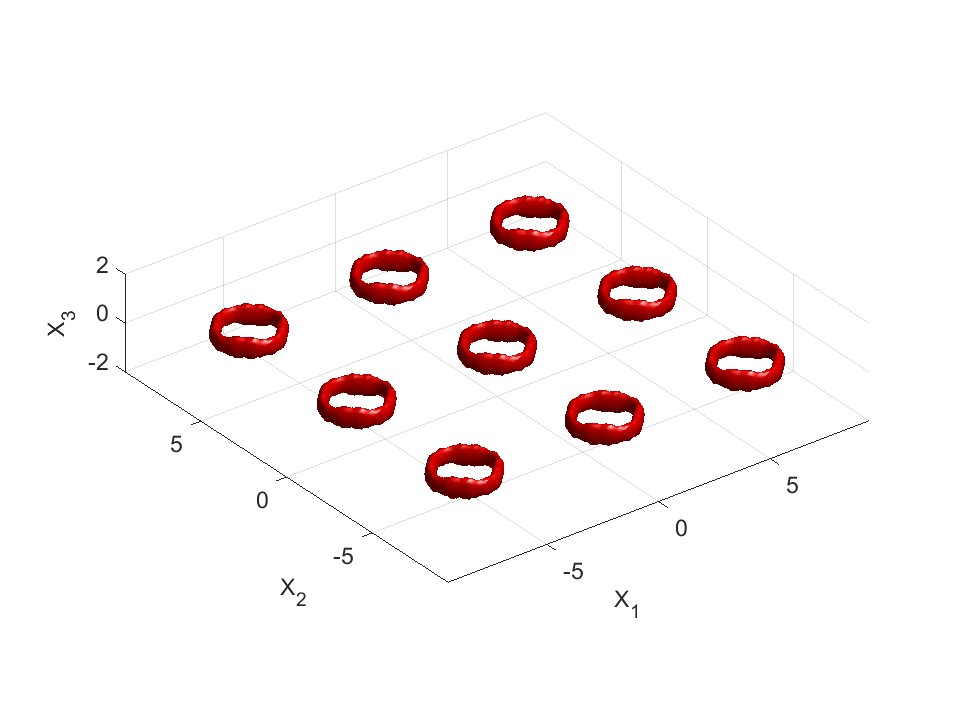}} \\
\subfloat[]{\includegraphics[width=5.5cm]{ring_true_xy}} \hspace{-0.3cm}
\subfloat[]{\includegraphics[width=5.5cm]{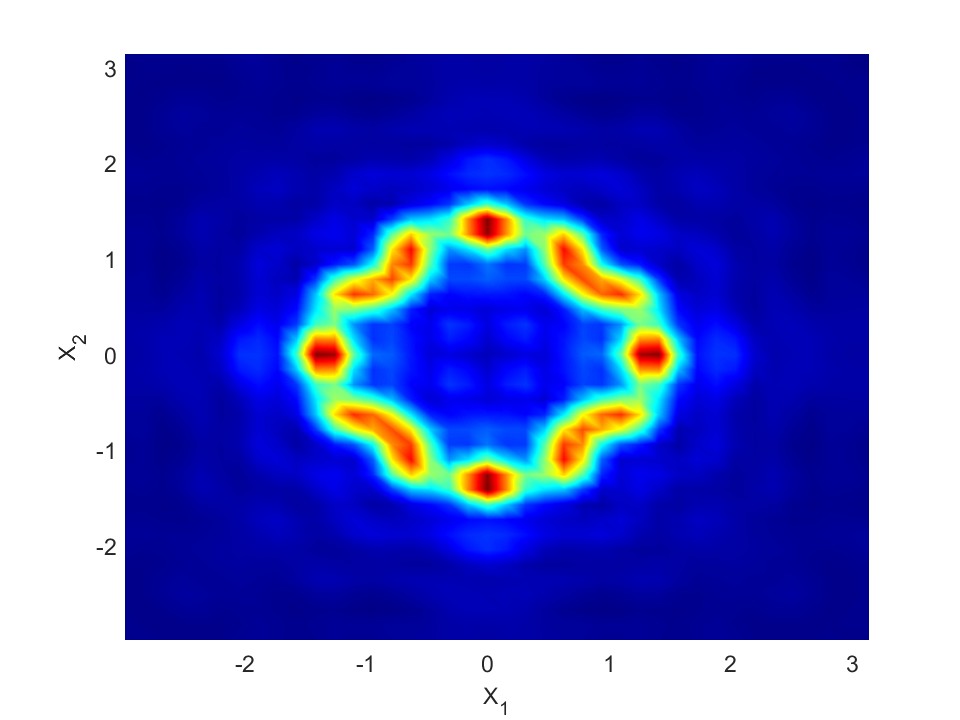}} \hspace{-0.3cm}
\subfloat[]{\includegraphics[width=5.5cm]{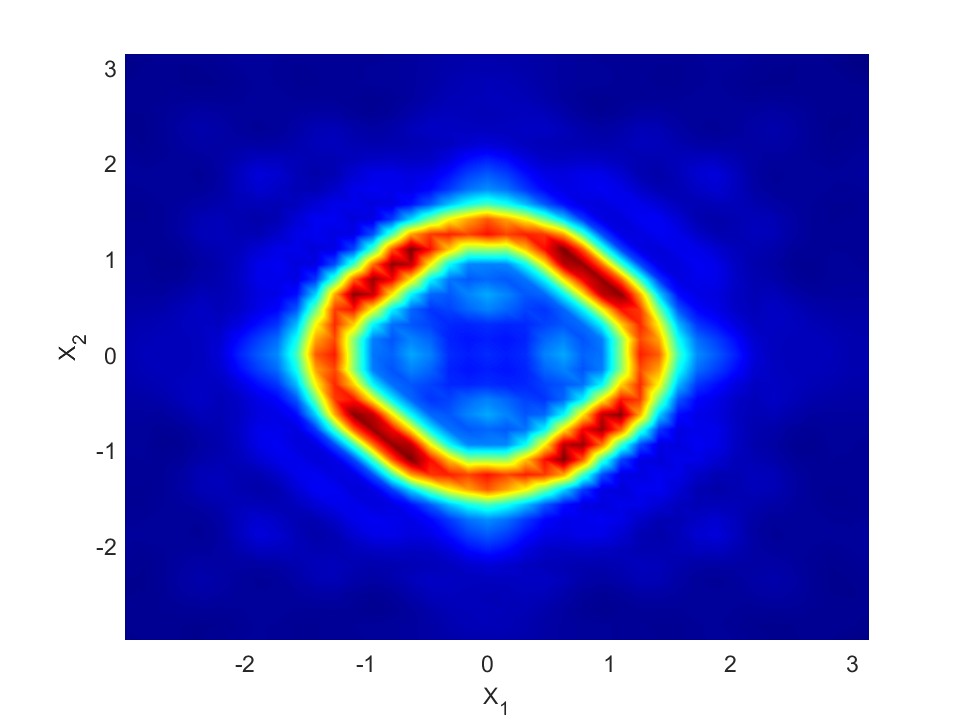}}\\
\subfloat[]{\includegraphics[width=5.5cm]{ring_true_xz}} \hspace{-0.3cm}
\subfloat[]{\includegraphics[width=5.5cm]{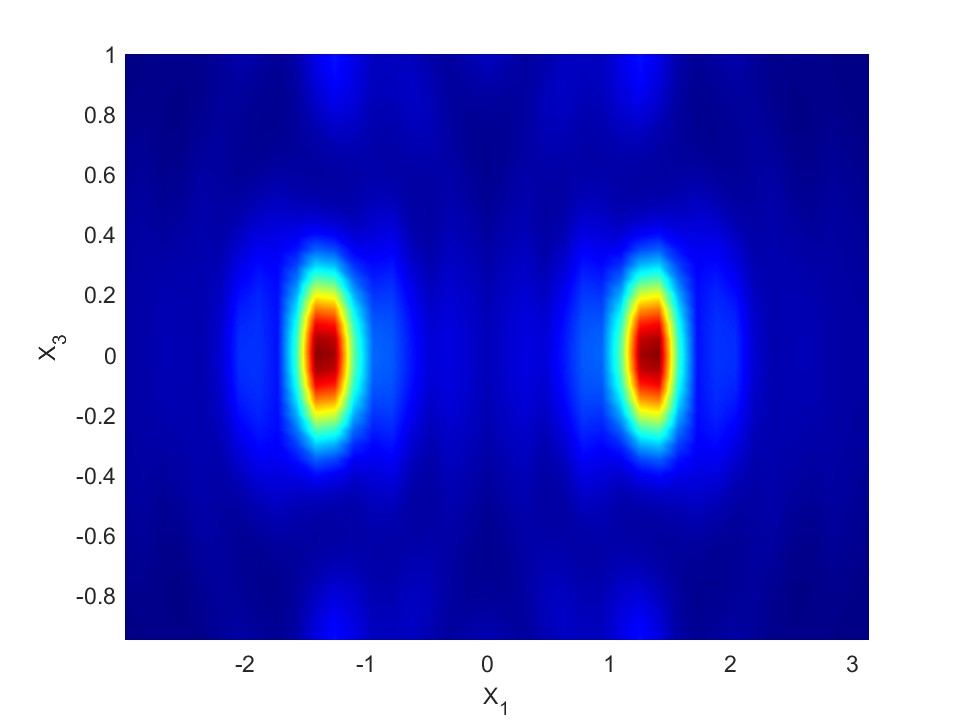}} \hspace{-0.3cm}
\subfloat[]{\includegraphics[width=5.5cm]{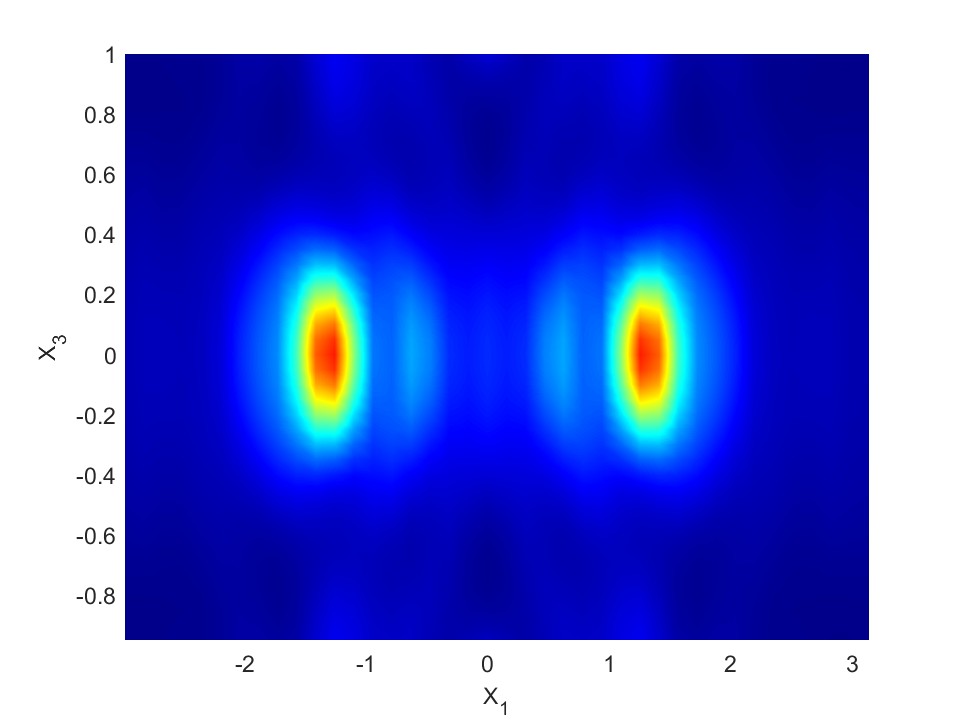}}
\caption{Reconstruction with different numbers of incident sources. First column (a, d, g): True geometry in 3D and 2D views. 
Second column (b, e, h): reconstruction with 200 incident sources. Second column (c, f, i): reconstruction with 800 incident sources.}
\label{incident}
\end{figure}

\subsection{Comparison with the orthogonality sampling method (Figures~\ref{compare_ring}-\ref{compare_cube})}
The orthogonality sampling method (OSM) is a well-known sampling  method that was studied extensively for the case of bounded scattering objects. Here we did multiple comparisons of our proposed method and the OSM. The imaging functional of the OSM is given by
$$
\mathcal{I}_{\text{OSM}}(\z) := \sum_{l=1}^N \left| \int_{\Gamma_{\rho}\cup\Gamma_{-\rho}} \u(\x,l) \cdot \mathbf{q} \overline{G(\z,\x)}\ \d s(\x) \right|^p,
$$
where  the polarization $\mathbf{q}=(1,1,1)^\top$, $\rho = 1.5$, and $p = 3$. 
From Figures~\ref{compare_ring}-\ref{compare_cube}  we can see that the new sampling method method can provide better reconstructions than the OSM. 
The OSM is able to provide reasonable reconstructions in the $x_1$ and $x_2$ directions but not the in the $x_3$ direction. 

\begin{figure}[h!]
\centering
\subfloat[]{\includegraphics[width=5.5cm]{ring_true_iso}}\hspace{-0.4cm}
\subfloat[]{\includegraphics[width=5.5cm]{ring_rec_iso}} \hspace{-0.4cm}
\subfloat[]{\includegraphics[width=5.5cm]{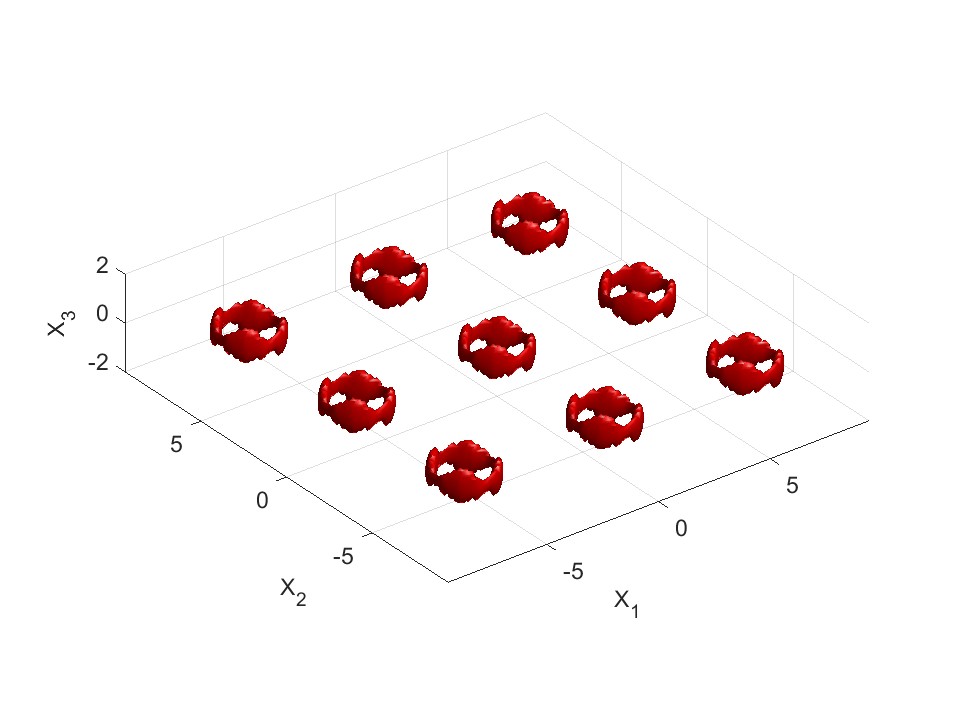}} \\
\subfloat[]{\includegraphics[width=5.5cm]{ring_true_xy}} \hspace{-0.3cm}
\subfloat[]{\includegraphics[width=5.5cm]{ring_rec_xy}} \hspace{-0.3cm}
\subfloat[]{\includegraphics[width=5.5cm]{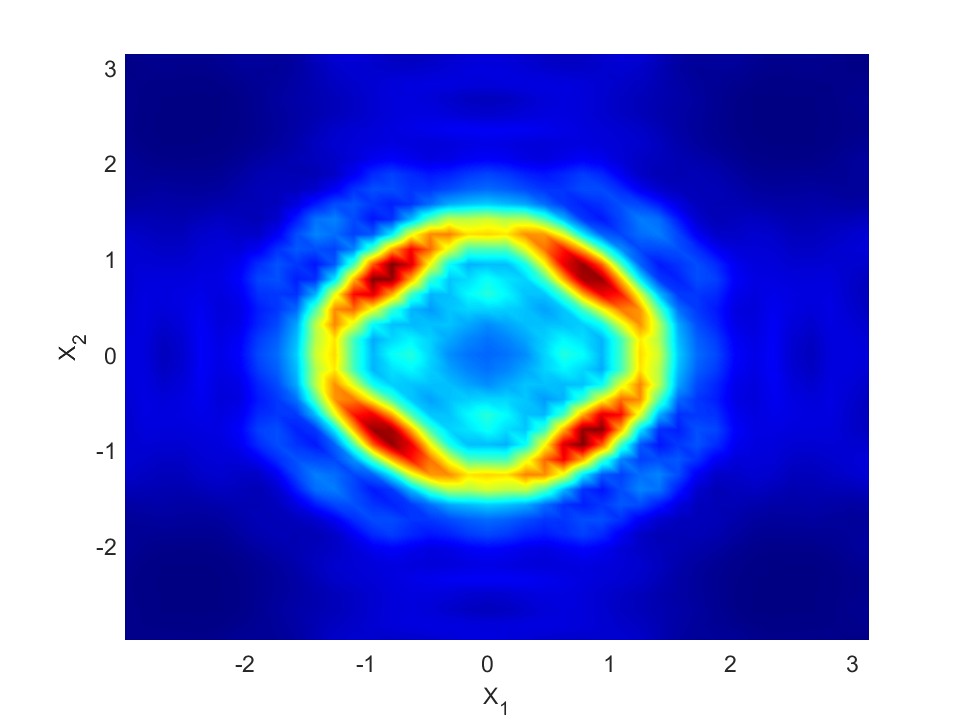}}\\
\subfloat[]{\includegraphics[width=5.5cm]{ring_true_xz}} \hspace{-0.3cm}
\subfloat[]{\includegraphics[width=5.5cm]{ring_rec_xz}} \hspace{-0.3cm}
\subfloat[]{\includegraphics[width=5.5cm]{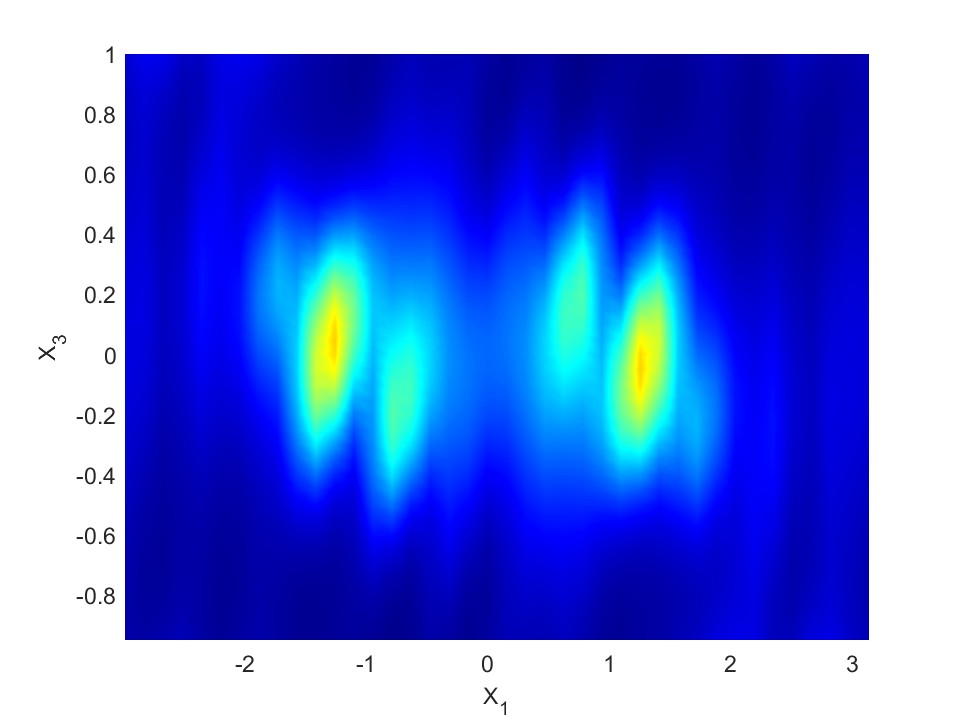}}
\caption{Comparison with the orthogonality sampling method. First column (a, d, g): True geometry in 3D and 2D views. 
Second column (b, e, h): reconstruction with the new sampling method. Second column (c, f, i): reconstruction with the orthogonality sampling method.}
\label{compare_ring}
\end{figure}

\begin{figure}[h!]
\centering
\subfloat[]{\includegraphics[width=5.5cm]{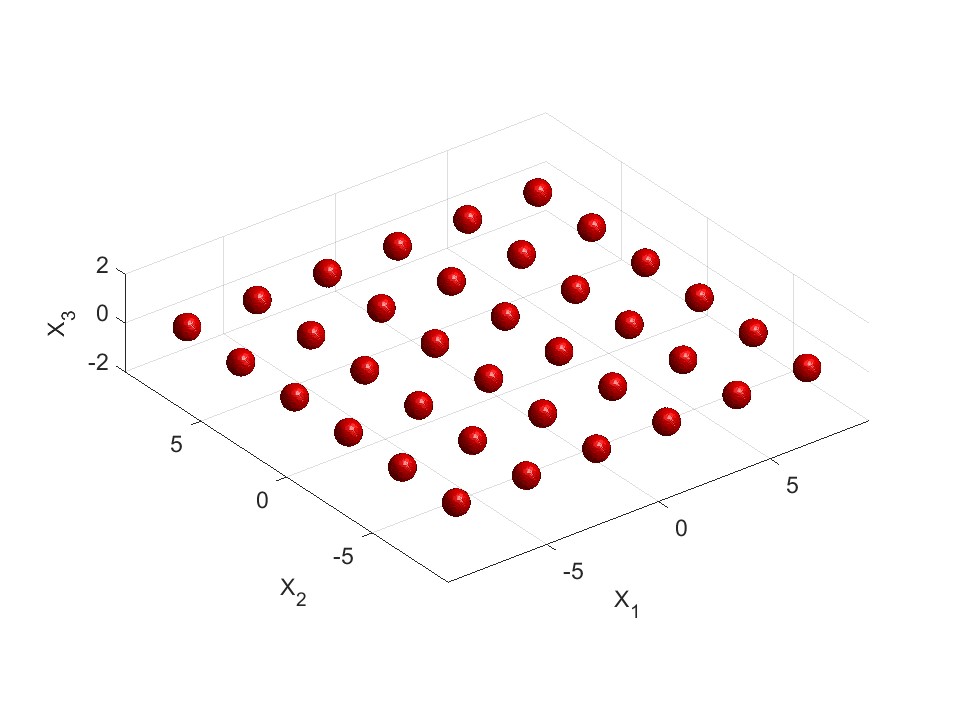}}\hspace{-0.4cm}
\subfloat[]{\includegraphics[width=5.5cm]{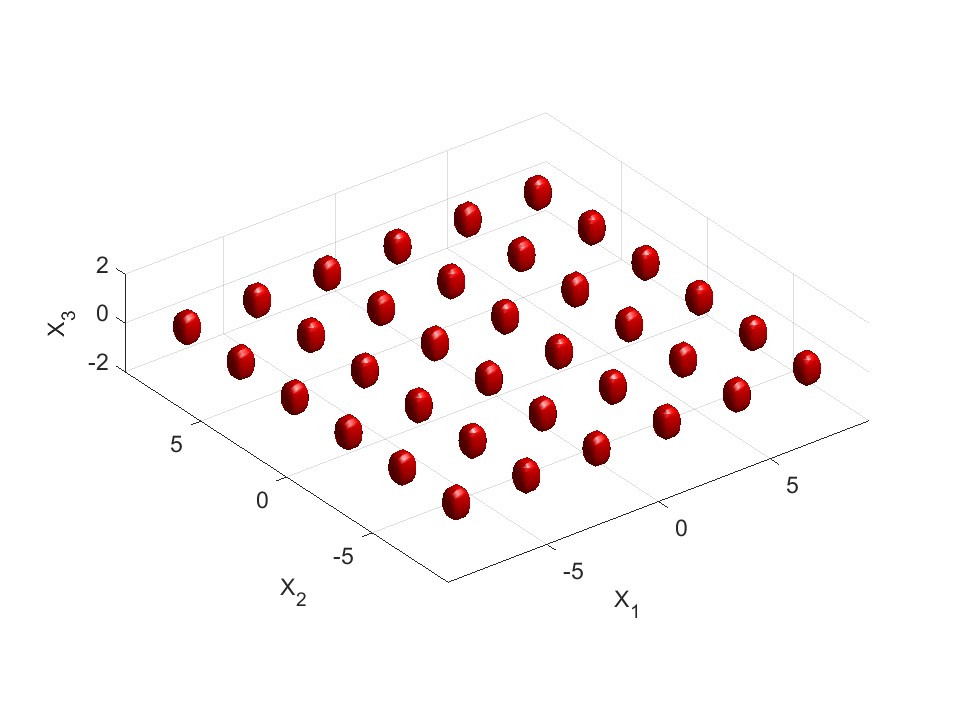}} \hspace{-0.4cm}
\subfloat[]{\includegraphics[width=5.5cm]{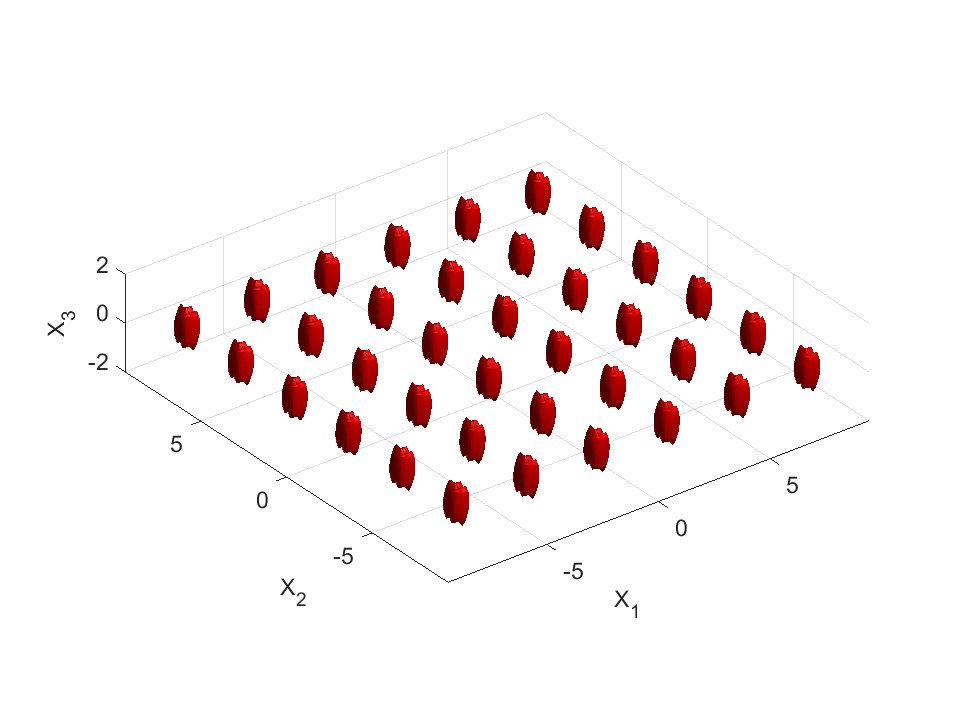}} \\
\subfloat[]{\includegraphics[width=5.5cm]{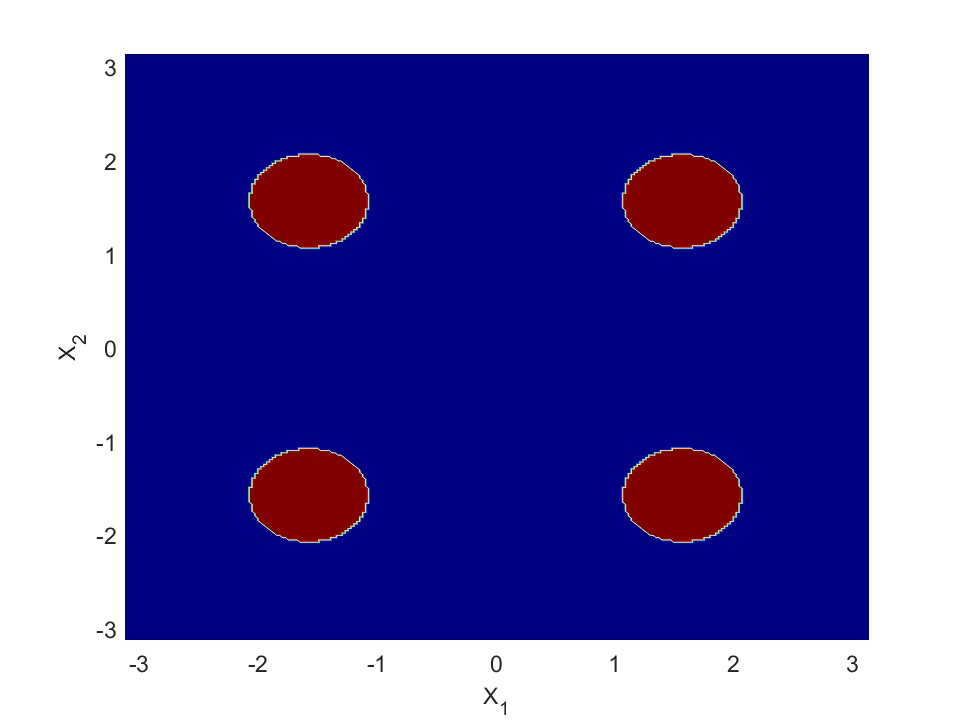}} \hspace{-0.3cm}
\subfloat[]{\includegraphics[width=5.5cm]{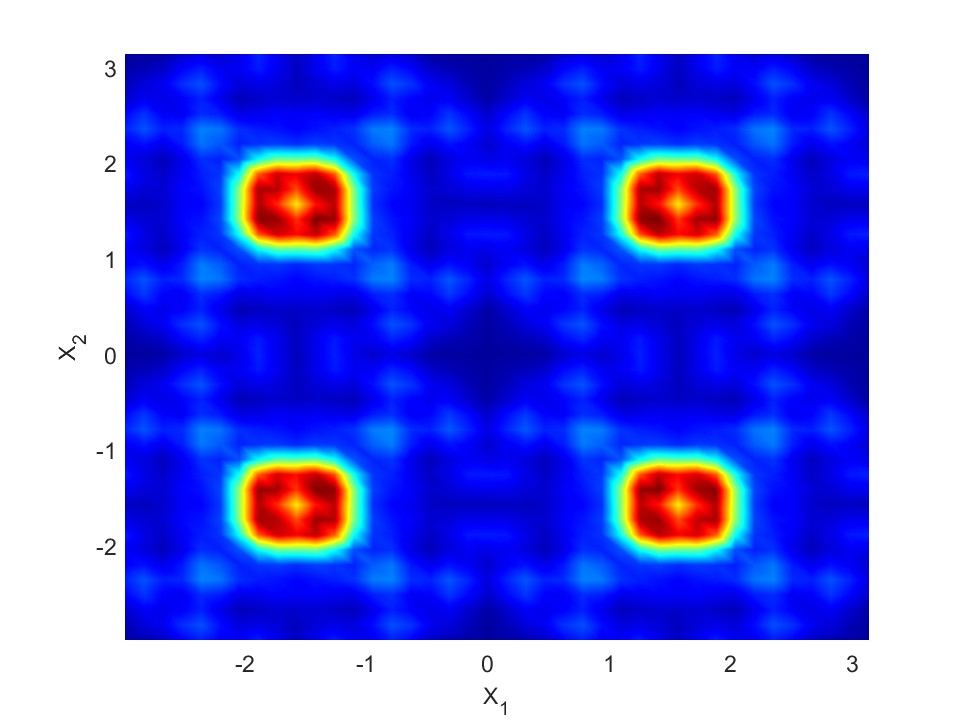}} \hspace{-0.3cm}
\subfloat[]{\includegraphics[width=5.5cm]{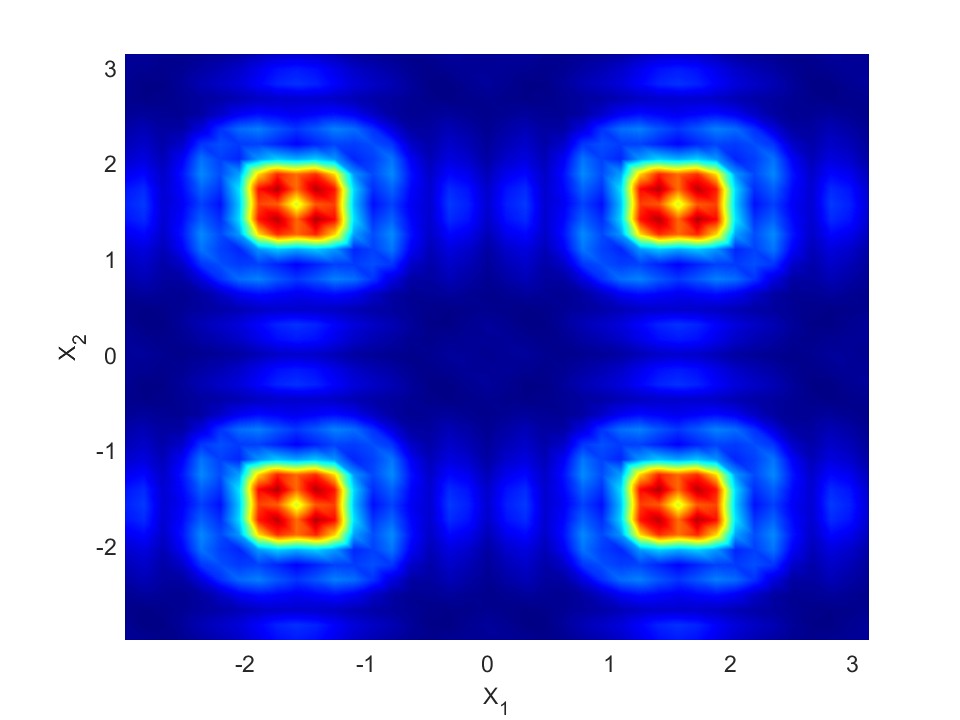}}\\
\subfloat[]{\includegraphics[width=5.5cm]{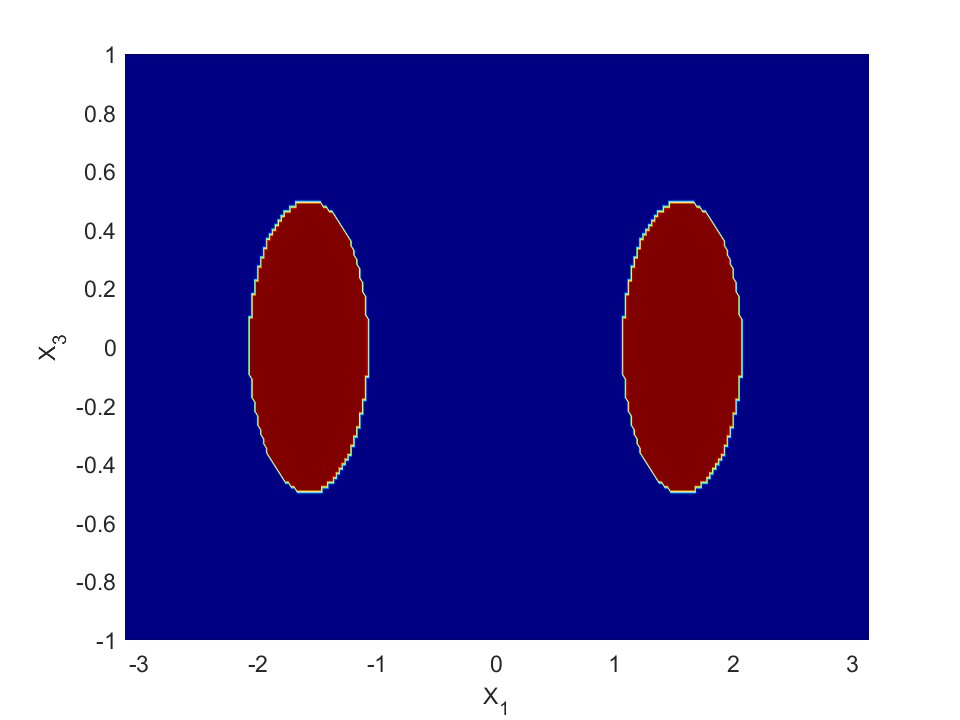}} \hspace{-0.3cm}
\subfloat[]{\includegraphics[width=5.5cm]{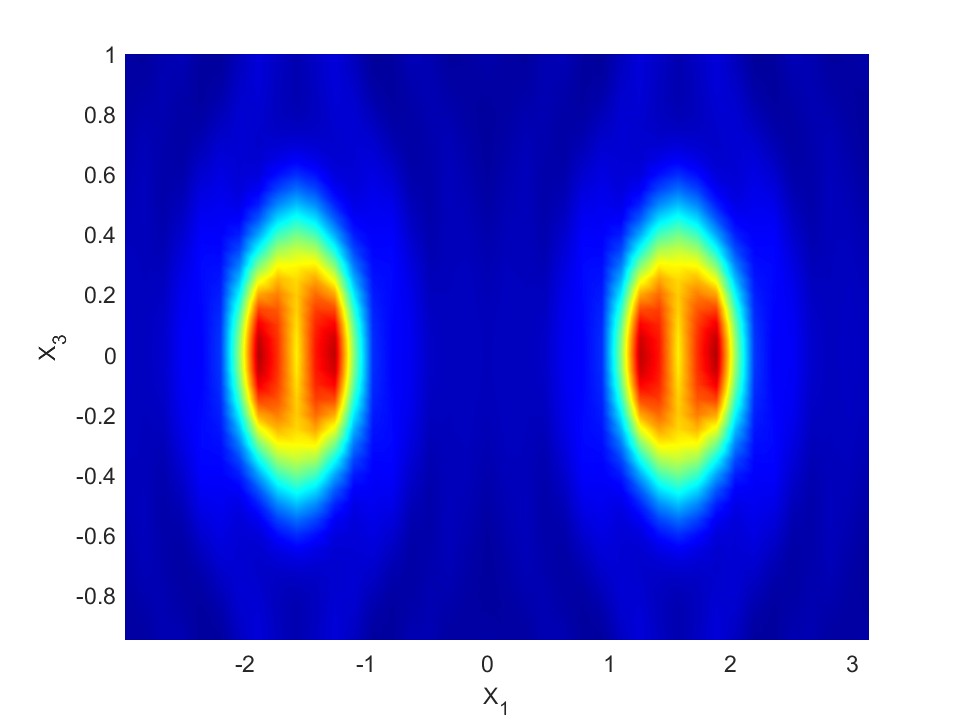}} \hspace{-0.3cm}
\subfloat[]{\includegraphics[width=5.5cm]{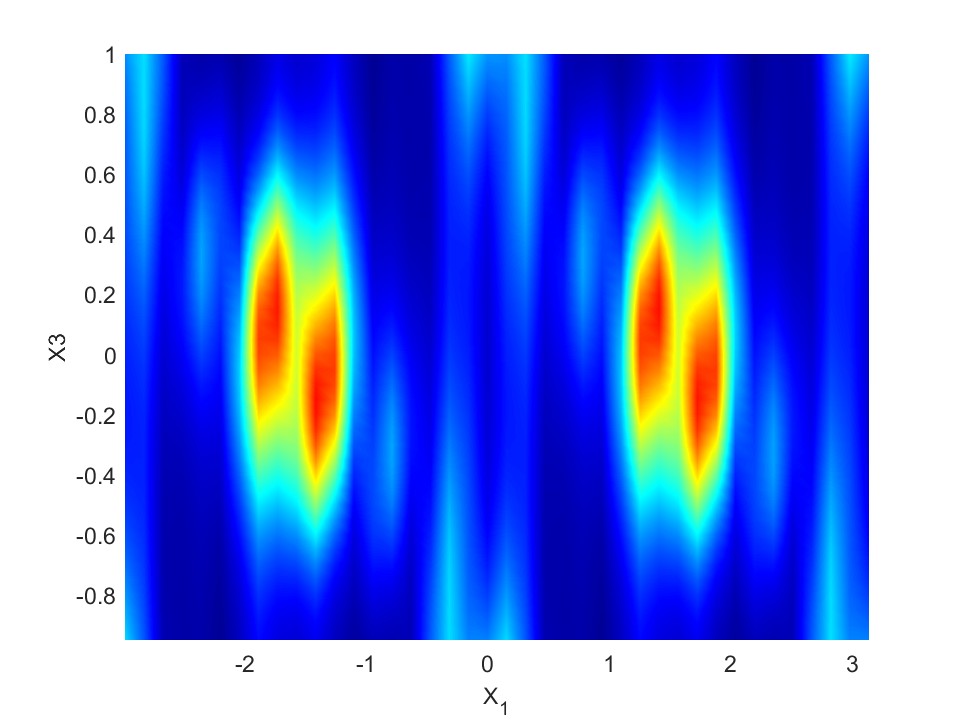}}
\caption{Comparison with the orthogonality sampling method. First column (a, d, g): True geometry in 3D and 2D views. 
Second column (b, e, h): reconstruction with the new sampling method. Second column (c, f, i): reconstruction with the orthogonality sampling method.}
\label{compare_balls}
\end{figure}

\begin{figure}[h!]
\centering
\subfloat[]{\includegraphics[width=5.5cm]{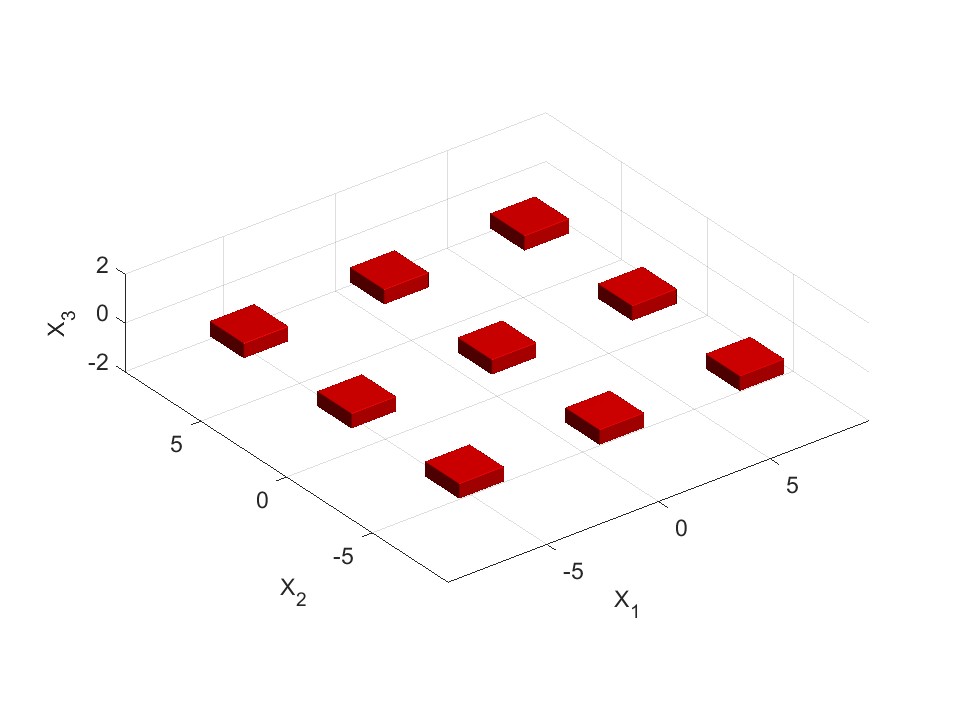}}\hspace{-0.4cm}
\subfloat[]{\includegraphics[width=5.5cm]{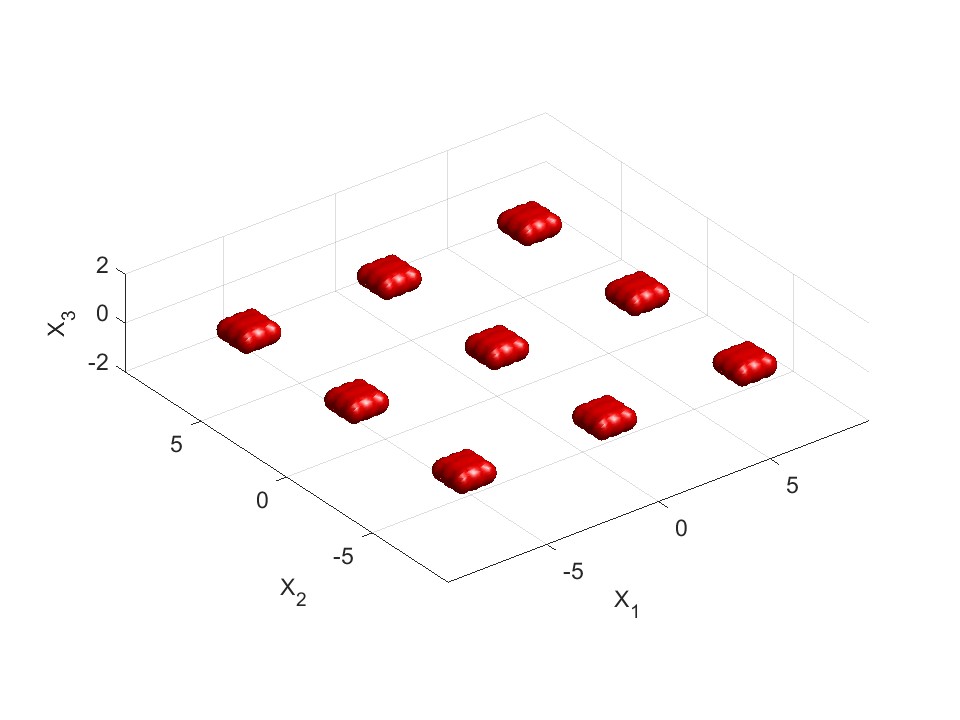}} \hspace{-0.4cm}
\subfloat[]{\includegraphics[width=5.5cm]{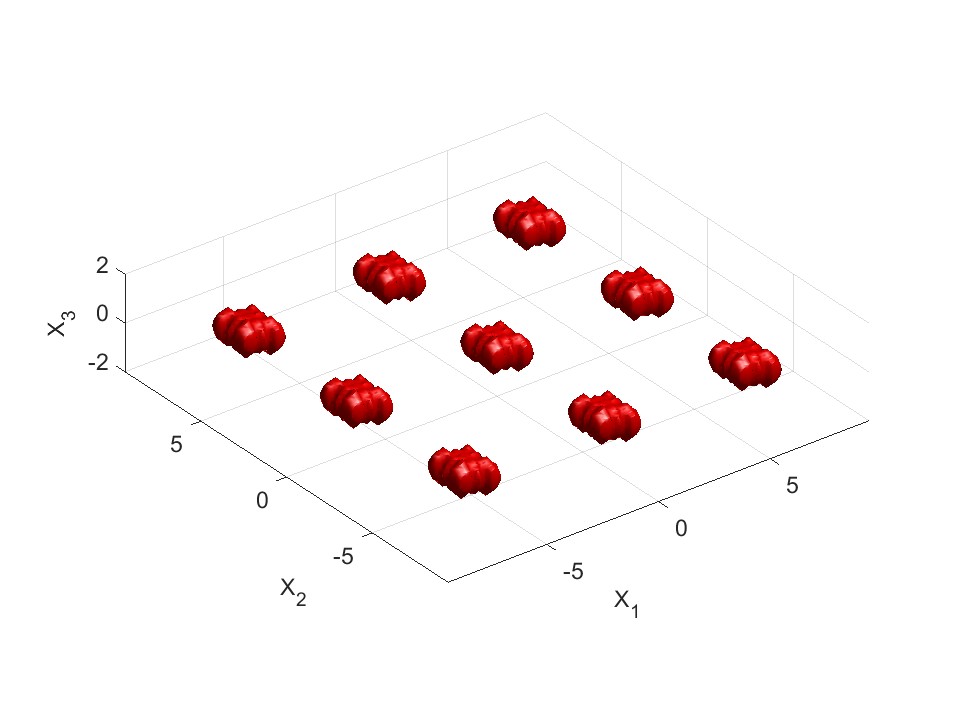}} \\
\subfloat[]{\includegraphics[width=5.5cm]{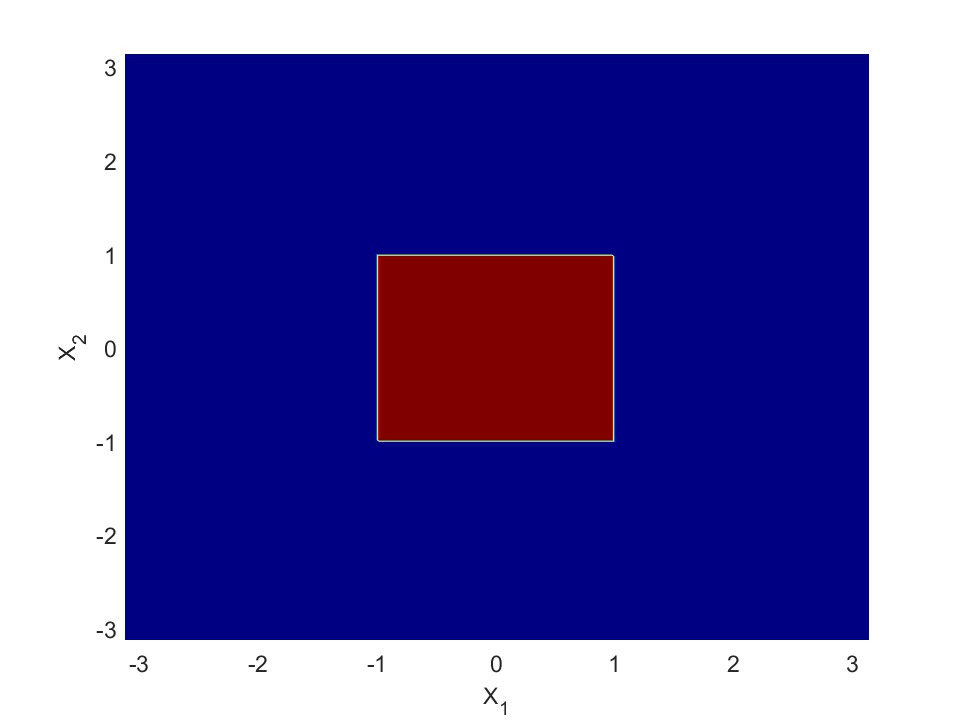}} \hspace{-0.3cm}
\subfloat[]{\includegraphics[width=5.5cm]{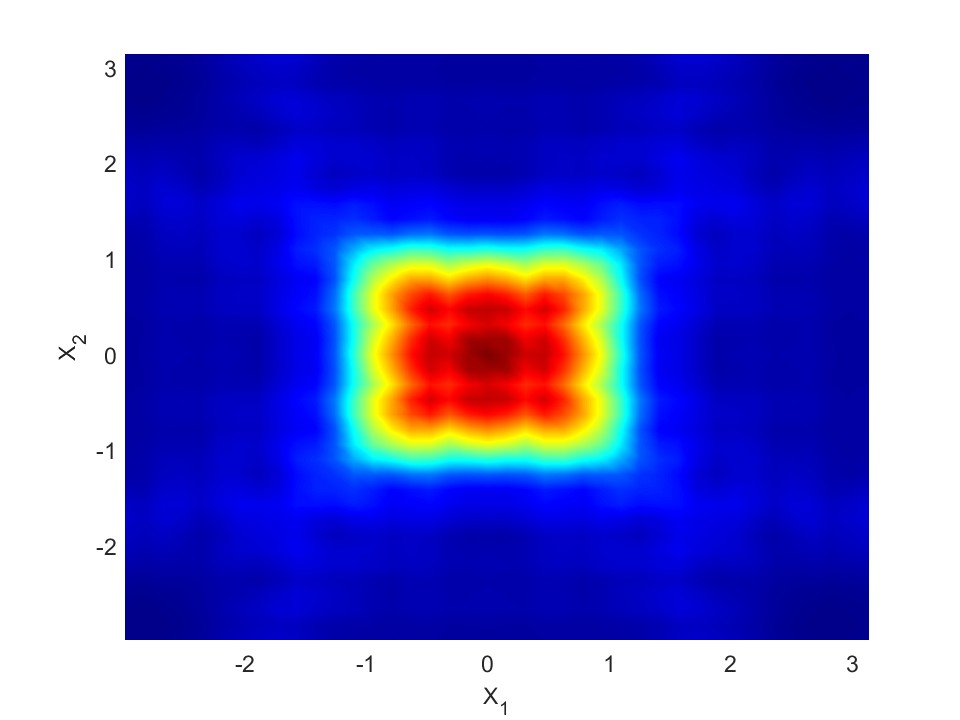}} \hspace{-0.3cm}
\subfloat[]{\includegraphics[width=5.5cm]{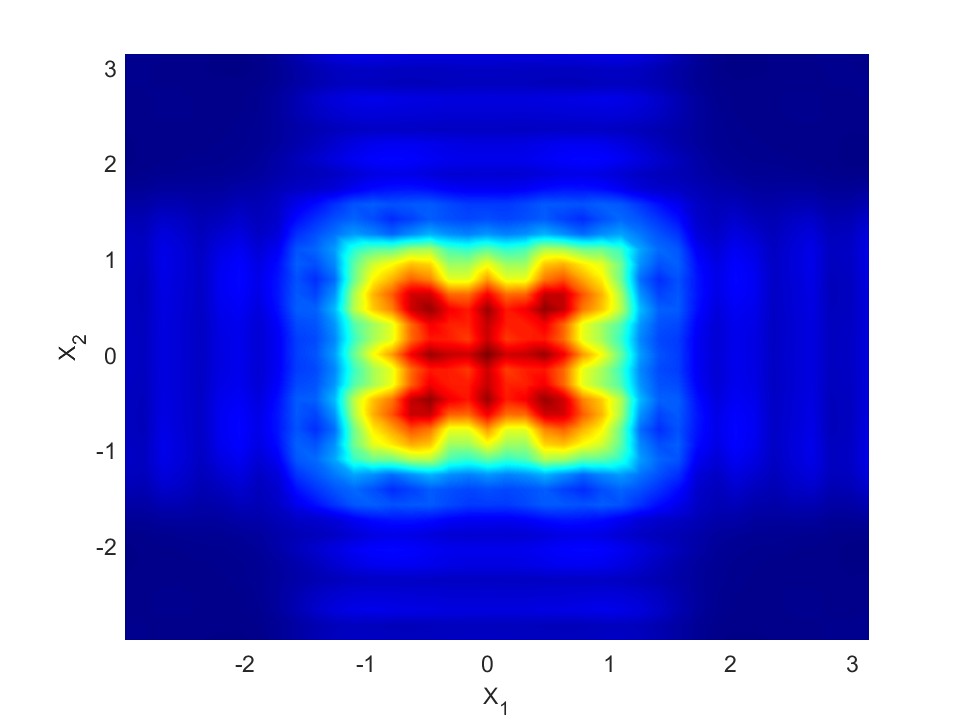}}\\
\subfloat[]{\includegraphics[width=5.5cm]{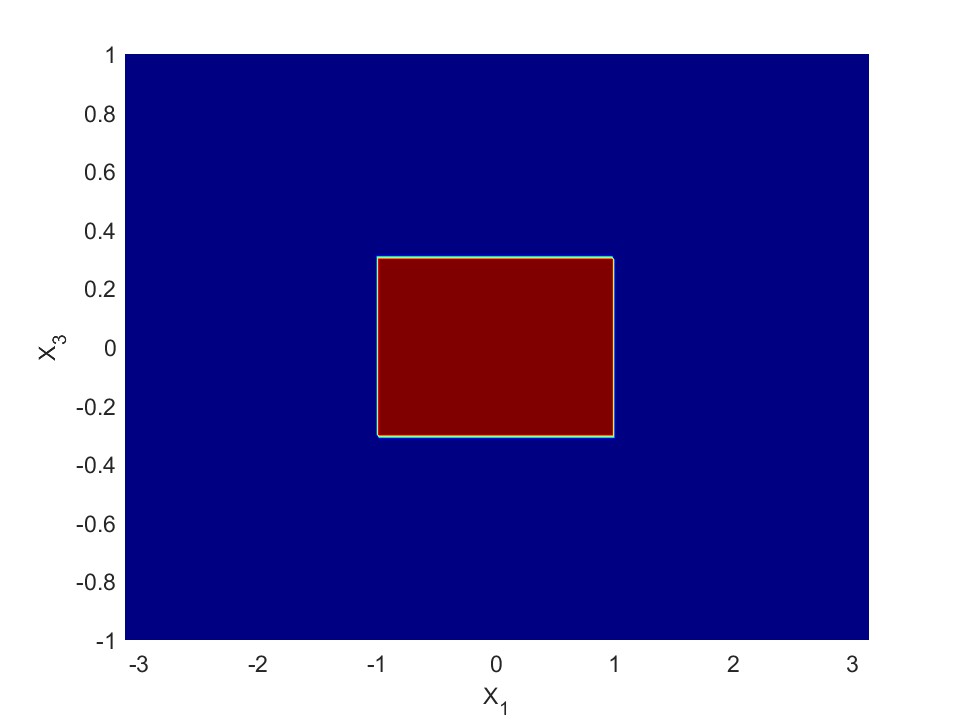}} \hspace{-0.3cm}
\subfloat[]{\includegraphics[width=5.5cm]{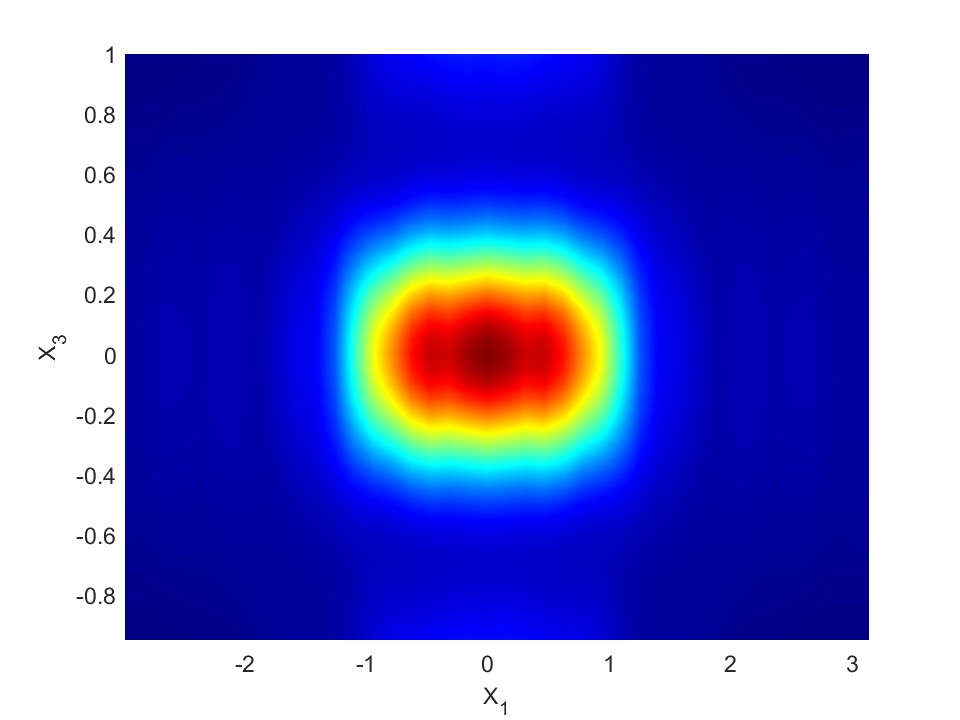}} \hspace{-0.3cm}
\subfloat[]{\includegraphics[width=5.5cm]{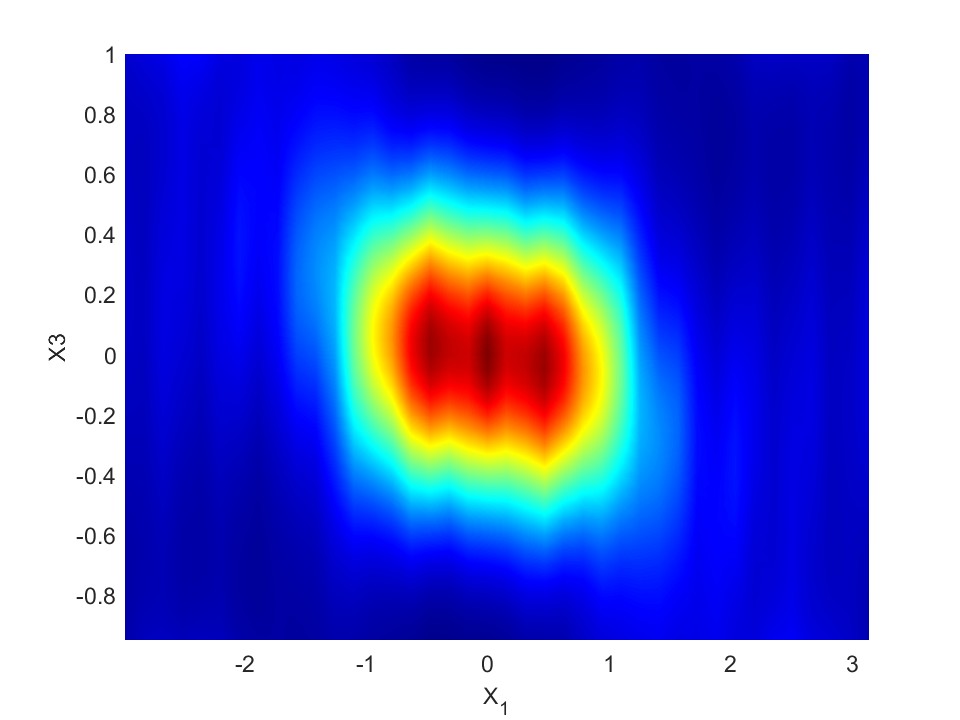}}
\caption{Comparison with the orthogonality sampling method. First column (a, d, g): True geometry in 3D and 2D views. 
Second column (b, e, h): reconstruction with the new sampling method. Second column (c, f, i): reconstruction with the orthogonality sampling method.}
\label{compare_cube}
\end{figure}


\textbf{Acknowledgment.}  The work of the D.-L. Nguyen and T. Truong  was partially supported by NSF Grant  DMS-2208293.

\bibliographystyle{plain}
\bibliography{ip-biblio2}

\end{document}